\newtheorem*{theorem*}{Theorem}
\newtheorem{theorem}{Theorem}[section]
\newtheorem{lemma}[theorem]{Lemma}
\newtheorem{proposition}[theorem]{Proposition}
\theoremstyle{definition}
\newtheorem{definition}[theorem]{Definition}
\newtheorem{remark}[theorem]{Remark}
\newtheorem{example}[theorem]{Example}
\numberwithin{equation}{section}
\newcommand{\cs}{\ensuremath{C^*}} 
\newcommand{\adj}{\ensuremath{\mathcal{L}}}
\newcommand{\bK}{\mathbb{K}}
\newcommand{\C}{\mathbb{C}}
\newcommand{\Q}{\mathcal{Q}}
\newcommand{\R}{\mathcal{R}}
\newcommand{\id}{\text{id}}
\newcommand{\supp}{\text{supp}}
\newcommand{\inv}{^{-1}}
\newcommand\thmsname{\protect\theoremname}
\newcommand\nm@thmtype{theorem}
\theoremstyle{plain}
\newenvironment{namedthm}[1][Undefined Theorem Name]{
  \ifx{#1}{Undefined Theorem Name}\renewcommand\nm@thmtype{theorem*}
  \else\renewcommand\thmsname{#1}\renewcommand\nm@thmtype{namedtheorem}
  \fi
  \begin{\nm@thmtype}}
  {\end{\nm@thmtype}}
\theoremstyle{plain}
\title{Principal Actions on Topological Quivers and Associated Operator Dynamics}
\author[Gillespie]{Matthew Gillespie}
\address{Matthew Gillespie
\\School of Mathematical and Statistical Sciences\\
Arizona State University\\
Tempe, AZ 85287}
\email{}
\author[Hall]{Lucas Hall}
\address{Lucas Hall
\\Department of Mathematics
\\Michigan State University
\\East Lansing, Michigan 48824}
\email{hallluc1@msu.edu}
\author[Jones]{Benjamin Jones}
\address{Benjamin Jones
\\School of Mathematical and Statistical Sciences\\
Arizona State University\\
Tempe, AZ 85287}
\email{brjone16@asu.edu}
\author[Tobolski]{Mariusz Tobolski}
\address{Mariusz Tobolski
\\Instytut Matematyczny, Uniwersytet Wroc\l{}awski, 
pl. Grunwaldzki 2/4, 50-384 Wroc\l{}aw, Poland
}
\email{mariusz.tobolski@math.uni.wroc.pl}
\begin{document}
\begin{abstract}
    We study topological quivers $Q$ admitting a free and proper action by a locally compact group $G$ together with their associated \cs-algebras. On the topological side, we provide a complete classification of topological quivers which admit such actions in terms of $G$-bundles over the vertex orbit space and an appropriate isomorphism of bundles over the edge orbits. Following the work by Deaconu, Kumjian, and Quigg on topological graphs, we construct an isomorphism between $C^*(Q/G)$ and Rieffel's fixed-point algebra $C^*(Q)^\alpha$, which is known to be Morita equivalent to $C^*(Q)\rtimes_rG$. Unlike the previously mentioned work with topological graphs, we use previously developed functoriality techniques to identify the isomorphism. We also examine many concrete examples of such group actions, including some exclusive to topological quivers, and the associated Morita equivalences.
\end{abstract}
\maketitle

\section{Introduction}

Since their introduction in 1997 \cite{kprr}, Graph (\cs-)algebras have enjoyed a great deal of attention in the field of operator algebras. The reasons for this are many, but we will mention here only three. Firstly, they offer multiple realizations: models coming from dynamics; generators and relations; and \cs-correspondences. Secondly, features of the graph's combinatorial data lucidly determine algebraic features of the associated graph algebra. Lastly, these features extend widely to more generalized contexts (see \cite{graphalgebras} for a well respected overview, and specifically the last two chapters for two divergent generalizations, namely those of higher-rank graphs and of topological graphs). The second reason alone is incredibly appealing, since it yields connections across numerous fields of mathematics, and indeed graph algebras have both borrowed techniques from these other fields and inspired progress in others. As just one example, graph algebras spawned the purely algebraic study of Leavitt Path algebras, introduced in 2005 \cite{aaLeavitt} and independently \cite{amp}, has now garnered enough attention to earn  at least one standard reference \cite{BookLeavitt}, as well as published conference proceedings \cite{MR4377762}.  

One example of borrowed techniques comes in the study of dynamics of directed graphs and their associated \cs-algebras. In 1999, Kumjian and Pask demonstrated that a discrete group $G$ acting on a directed graph $E$ induces an action on the associated graph \cs-algebra $\cs(E)$, and whenever the action is free, there is a Morita equivalence between the quotient graph \cs-algebra $\cs(E/G)$ and the reduced crossed product $\cs(E)\rtimes_r G$ \cite{kpaction}. Shortly after in \cite{kqrskew}, Kaliszewski, Quigg, and Raeburn leveraged a classification of free actions on graphs \cite{grosstucker} through a \emph{skew product} construction to construct \emph{coactions} on the graph \cs-algebras, and using the theory of nonabelian duality recovered Kumjian and Pask's original result (Raeburn's \cite{graphalgebras}*{Chapter 6} offers a good overview of this stream of discovery). 

In the intervening years, attention has steadily turned to the investigation of generalized dynamics (here meaning the agnostic study of actions and coactions of classical groups on \cs-algebras) amidst the aforementioned generalizations to graph algebras. Along the direction of higher rank graphs \cite{graphalgebras}*{Chapter 10} and their generalizations are the works \cites{pqr, pqs, fps, kkqs,  bkq} which concern generalized dynamics of higher-rank graphs. We mention here that the discrete nature of higher-rank graphs lends itself to a higher rank version of the Gross-Tucker classification of graphs admitting free group actions \cite{pqr}*{Corollary 6.5} and lends itself once again to a natural and universal description of dynamics in terms of coactions and nonabelian duality. However, this article is not concerned with this research direction. 

Instead, this article takes inspiration from Katsura's suite of articles on topological graphs \cites{katsura1, katsura2, katsura3, katsura4}, and the generalized dynamics of (in general, locally compact) groups on these topological objects \cites{dkq, kqskew}. Historically, this pair of papers followed the pattern set out for directed graph algebras, investigating actions of locally compact groups $G$ on topological graphs $E$ and the associated algebras $\cs(E)$ before turning attention to skew product topological graphs directly. It is foundational to note that in the topological setting there are obstructions to a classification of free actions discussed above for the directed and higher rank graph settings, since general locally compact spaces keep track of local gluing phenomena which are not present for discrete spaces. One gains some ground when they consider at least principal actions on the topological space (see Theorem \ref{thmhus}), however unlike the discrete setting a skew product construction will not model all possible dynamics (cf. Examples \ref{qr} and \ref{kickass}). Nevertheless, in \cite{dkq} the authors identify conditions under which one may classify dynamics on topological graphs as an inheritance from the pullback construction which does not feature cocycles, and in \cite{kqskew} the authors analyze this structure in the presence of a cocycle, recover the associated coaction construction, and consequentially the Morita equivalence result using nonabelian duality. This marks a departure from the discrete setting, where either method is available to produce the Morita Equivalence-- topological graphs which admit a free and proper action are not always witnessed as skew product graphs. 

We mention that \cite{dkq} inspired the sequence of papers \cites{kqrfunctor, kqrCPcoaction, bkqr} which introduced valuable categorical tools to investigate functoriality of suitable correspondence homomorphisms between \cs-correspondences $(X, A)$, and importantly to induce homomorphisms between associated Cuntz-Pimsner algebras $\mathcal{O}_X.$ In the first article \cite{kqrfunctor}, the authors mention how to apply these general results for Cuntz-Pimsner algebras to the case of a topological graph, and \cite{kqskew} directly uses the classification of skew product topological graphs from \cite{dkq} together with the developments from \cite{kqrCPcoaction} to construct a coaction analogous to that which is found in \cite{kqrskew}. The tools from \cites{kqrfunctor, kqrCPcoaction, bkqr} are very influential to this article as well.

Like the pair of companion articles \cites{kpaction, kqrskew} and \cite{dkq, kqskew}, which study the actions and coactions of groups on directed graphs and topological graphs respectively, this article is the group action companion to the second author's investigation of coactions on topological quivers \cite{hskew}. Topological quivers \cites{mstensor, mtquiver} serve as the widest topological generalization to directed graphs, subsuming topological graphs and including rudimentary examples excluded by the restrictions on topological graphs; the plane $\mathbb{R}^2$ with range and source maps the coordinate projections comes to mind. The cost of this generality is the abandonment of local homeomorphisms and the introduction of new tools to measure the volume of edges emitted by a vertex (a role previously played in private by counting measures for both directed and topological graphs). The associated payoff is a class of examples which exhibits new phenomena for examining the topological--operator algebraic parallel. For example, \cite{bmaximal}*{Theorem 6.1 and Corollary 6.2} offers a recent display that topological quivers often violate Arveson's Hyperrigidity conjecture, recently disproven in \cite{b-do}. By comparison, correspondences of topological graphs always satisfy the hyperrigidity conjecture \cite{bmaximal}*{Corollary 6.2}. In this article, we investigate principal group actions on topological quivers, and introduce many new examples which exhibit the necessity of studying actions of locally compact groups in the full generality of topological quivers. We then employ the techniques of \cite{kqrfunctor} and \cite{dkq} to confirm features of topological graphs and their \cs-algebras analogous to those found for directed and topological graphs. On the topological side, we offer the following classification of principal actions on topological quivers. 
\begin{namedthm}[Theorem A](Theorem \ref{Topologicalclassification})
Given a topological quiver $Q$, a principal $G$-bundle $p:P\to E^0$ over the vertices of $Q,$ and an isomorphism of pullbacks $s^*(P)\cong r^*(P),$ there is a topological quiver $R$ with vertex space $P$ and edge space $s^*(P)$ which admits a principal action by $G.$  This construction accounts for every topological quiver which admits a principal $G$-action. 
\end{namedthm}
From the operator algebraic framework, topological quivers offer a natural example of a ``noncommutative topological space'' underlying the algebraic structure of a \cs-algebra, which is popularly regarded in the contemporary paradigm as the algebra of continuous functions on such a noncommutative topological space. As in the classical setting of a group on a topological space, a group action on a quiver induces an action on the associated \cs-algebra, and we have the following structure result for principal actions relating the reduced crossed product to the quiver algebra of the quotient quiver. 
\begin{namedthm}[Theorem B](Theorem \ref{ME})
If a locally compact group $G$ acts principally on a topological quiver $Q,$ then $\cs(Q)\rtimes_r G$ and $\cs(Q/G)$ are strongly Morita equivalent. 
\end{namedthm}

Throughout, a significant innovation is the maintenance of certain families of measures on the edge space of the topological quiver, conditions which appear in Lemmas \ref{convergenceofcrosssections} and \ref{convergence} and carry on into the main results. We also generalize a handful of constructions to the setting of quivers. Compared with directed or topological graphs, these measures are a new feature for topological quivers, though they have also appeared in similar contexts such as topological groupoids, where they play a role in defining the multiplication by convolution \cite{toolkit}. We also provide a large class of examples which displays the flexibility of topological quivers and the wide range of admissible group actions. These examples range from classical operator algebras associated to the the discrete setting of directed graphs all the way to the continuous setting which is only accessible with the full generality of topological quivers. 

The article is organized as follows: Section \ref{prelim} reviews the topological and operator algebraic preliminaries concerning topological quivers, $G$-bundles, and Cuntz-Pimsner algebras, and the tools from \cite{kqrfunctor}. Section \ref{actionexample} defines actions on topological quivers and introduces some classes of examples together with numerous instances which distinguish the examples within the known literature. Section \ref{topaspects} completely classifies the topological quivers which admit principal $G$-actions, and section \ref{correspondences} translates this classification into an operator algebraic framework. To finish, we present Morita equivalences for some of the examples provided in section \ref{actionexample}, particularly for examples requiring the full generality of topological quivers. 

\subsection{Acknowledgements}
This research is part of the EU Staff Exchange project 101086394
“Operator Algebras That One Can See” and was partially supported by the University of Warsaw Thematic Research Programme ``Quantum Symmetries". 
The second and fourth authors are also grateful for the hospitality of IMPAN during a visit in February 2024. 
The second author also recognizes the Zuckerman Institute, who has supported him as a Postdoctoral scholar during the completion of this work.
The fourth author is also delighted to thank Arizona State University in Tempe for their great hospitality.

\section{Preliminaries}\label{prelim}

\subsection{Quivers and Bundles}

\begin{definition}
A \emph{bundle} is nothing more than a surjective continuous map $p:E\to B$. We call the domain $E$ the \emph{total space} and the codomain $B$ the base space in this context. Additionally, one refers to the closed subspace $p\inv(v)\subset E$ as the \emph{fibre} over the base-point $v$. If the total space $E$ admits an action by $G$ (that is, a continuous map $\alpha:E\times G\to E$ which respects the group multiplication $\alpha\circ (\alpha\times \id) = \alpha \circ (\id\times m)$), then $p$ is called a \emph{$G$-bundle} (over $B$) provided there is an isomorphism $B\cong E/G$ of the base space and the quotient/orbit space of the action. A $G$-bundle is called \emph{principal} provided the action on $E$ is both free and proper. 
\end{definition}
Let us state the following result which follows from~\cite[Theorem~4.2]{husemoller}. This result was originally proven in \cite{palais}. See also \cite{hkqw2}*{Theorem 3.8} for a direct proof in more contemporary language which holds for actions of locally compact groupoids.
\begin{theorem}\label{thmhus}
Suppose we are given principal $G$-bundles $q_{i}:P_i\rightarrow X_i\cong P_{i}/G$
for $i=1,2$ and a pair of continuous maps $f:X_{1}\rightarrow X_{2}$,
$\widetilde{f}:P_{1}\rightarrow P_{2}$ such that $\widetilde{f}$
is $G$-equivariant and $q_{2}\circ\widetilde{f}=f\circ q_{1}$. Then $\theta_{f}:P_{1}\rightarrow f^{*}(P_{2})$
given by $\theta_{f}(p)=(q_{1}(p),\widetilde{f}(p))$ is an isomorphism
of $G$-bundles.
\end{theorem}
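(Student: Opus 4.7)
The plan is to verify that $\theta_f$ is a well-defined, continuous, $G$-equivariant bijection and then upgrade this to a bundle isomorphism by addressing the continuity of its inverse. First, I would confirm that $\theta_f(p)=(q_1(p),\widetilde{f}(p))$ actually lands in the fibre product $f^*(P_2)=\{(x,p_2)\in X_1\times P_2:f(x)=q_2(p_2)\}$, which is immediate from the compatibility hypothesis $q_2\circ\widetilde{f}=f\circ q_1$; continuity is clear from continuity of $q_1$ and $\widetilde{f}$. With $G$ acting on $f^*(P_2)$ through the second coordinate, equivariance of $\theta_f$ reduces to the $G$-invariance of $q_1$ together with the equivariance hypothesis on $\widetilde{f}$.

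Next I would handle bijectivity directly from the principal bundle axioms. For injectivity, if $\theta_f(p)=\theta_f(p')$ then $q_1(p)=q_1(p')$, so $p'=p\cdot g$ for a unique $g\in G$; equivariance then yields $\widetilde{f}(p)=\widetilde{f}(p)\cdot g$, and freeness of the $G$-action on $P_2$ forces $g=e$. For surjectivity, given $(x,p_2)\in f^*(P_2)$, I would pick any $p_1'\in q_1\inv(x)$. The compatibility condition places $\widetilde{f}(p_1')$ in the fibre $q_2\inv(f(x))=q_2\inv(q_2(p_2))$, so transitivity of the $G$-action on that fibre yields $g\in G$ with $\widetilde{f}(p_1')\cdot g=p_2$; then $p_1:=p_1'\cdot g$ satisfies $\theta_f(p_1)=(x,p_2)$.

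The hard part will be establishing continuity of the inverse $\theta_f\inv$, since a continuous equivariant bijection between topological spaces need not be a homeomorphism without extra input. My strategy is to leverage the local triviality of $P_1$ and of the pullback $f^*(P_2)$ as principal $G$-bundles over $X_1$, which is precisely the content of the principal bundle structure cited from Husemöller. Over a common trivializing neighborhood $U\subset X_1$, the map $\theta_f$ becomes a continuous $G$-equivariant map $U\times G\to U\times G$ covering $\id_U$, which is necessarily of the form $(u,g)\mapsto(u,\tau(u)g)$ for some continuous $\tau:U\to G$. Continuity of inversion in $G$ then makes the local inverse $(u,h)\mapsto(u,\tau(u)\inv h)$ continuous, and gluing these over a trivializing cover of $X_1$ produces a continuous global inverse, completing the verification that $\theta_f$ is an isomorphism of $G$-bundles.
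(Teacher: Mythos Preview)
The paper does not supply its own proof; it records the statement as following from Husem\"oller's Theorem~4.2, with Palais and \cite{hkqw2}*{Theorem~3.8} cited for direct arguments. Your first two steps---well-definedness, continuity, $G$-equivariance, and bijectivity of $\theta_f$---are correct and standard. The gap is in your third step. The paper's working definition of a principal $G$-bundle requires only that the action be free and proper; it does \emph{not} assume local triviality, and for general locally compact groups acting on locally compact Hausdorff spaces these two notions are not equivalent. Your appeal to trivializing charts is therefore not licensed by the definitions in force here, and invoking ``the principal bundle structure cited from Husem\"oller'' is circular: Husem\"oller is being cited precisely for the conclusion you are trying to reach, not for local triviality of free-and-proper actions.

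What freeness and properness together \emph{do} provide is a continuous translation map: the shear $P_2\times G\to P_2\times P_2$, $(p,g)\mapsto(p,p\cdot g)$, is a proper continuous injection onto the closed subset $P_2\times_{X_2}P_2$, hence a homeomorphism onto its image, and inverting yields a continuous $\tau$ with $p\cdot\tau(p,p')=p'$. The pullback $f^*(P_2)$ over $X_1$ inherits the same structure. Then $(y,p)\mapsto p\cdot\tau\big(\theta_f(p),y\big)$ defines a continuous map $f^*(P_2)\times_{X_1}P_1\to P_1$ which is invariant for the $G$-action on the $P_1$-factor and descends along the open first projection to the desired continuous inverse $\theta_f\inv$. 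This is the mechanism behind the cited references in the generality the paper needs; your local-chart argument becomes available only after something like Palais's slice theorem supplies local triviality, and even that applies only to Lie groups.
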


The interpretation of Theorem \ref{thmhus} is that the only way to get a $G$-equivariant map between total spaces of principal $G$-bundles is the pullback construction.  

A \emph{topological quiver} is an ensemble $(E^0, E^1, r, s, \lambda)$ comprising second countable locally compact spaces $E^i$ together with continuous maps $r,s:E^1\to E^0$, where $s$ is additionally open, and a continuous full and proper $s$-system of measures $\lambda = \{\lambda_v\}_{v\in E^0}$. The last structure entails that $\lambda_v$ are measures on $E^1, \lambda_v \neq 0,$ $\supp(\lambda_v) = s\inv(v),$ and that given any $\xi\in C_c(E^1),$ the assignment
\[v\mapsto \int_{E^1} \xi(e)\,d\lambda_v(e)\]
determines a continuous function. Notice when $\supp(\xi) = K$, this function is always supported on a closed subset of $s(K)$. These systems of measures are precisely the tools mentioned in the introduction which measure the ``volume'' of edges emitted from a vertex. When the source map is a local homeomorphism (i.e. the quiver is a topological graph in the sense of Katsura \cite{katsura1}) the $s$-system is populated by counting measures and safely omitted from the data. By extension, when the vertex space $E^0$ and edge space $E^1$ are discrete topological quivers instantiate directed graphs.

Let $Q=(E^0, E^1, r_Q, s_Q, \lambda)$ and $R=(F^0, F^1, r_R, s_R, \mu)$ be topological quivers. A \emph{morphism} of topological quivers is a pair of continuous open maps $e:E^1\to F^1$ and $v:E^0\to F^0$ such that $r_R\circ e = v\circ r_Q$, $s_R\circ e = v\circ s_Q$ and the pullback measures of $\mu$ under $e$ agree with the systems associated to $Q$. An isomorphism of topological quivers is a quiver morphism $(e, v)$ with $e, v$ homeomorphisms which also preserves the measurements of the $s_Q,$ (resp. $s_R$) systems (which is to say that the pushforward and pullback measures by the appropriate continuous maps assign the same values to the images and preimages of measureable sets). Then an automorphism of $Q$ is a quiver isomorphism from $Q$ to $Q$, and the family of all such forms a group under the natural compositions.

\subsection{Operator Algebras}

Our references for \cs-correspondences are \cites{raeburnwilliams, lance}. Homomorphisms between \cs-algebras are assumed to be $*$-preserving. The multiplier algebra of a \cs-algebra $A$ is denoted $M(A)$, and is the largest \cs-algebra containing $A$ as an essential ideal; in this article, the most natural realization for the multipliers is $M(A)=\adj(A)$, the set of all adjointable operators from $A$ to itself considered as a right Hilbert $A$-module over itself.   A homomorphism $\pi: A\to M(B)$ is \emph{nondegenerate} provided $\text{span}(\pi(A)B)$ is dense in $B$. Recall that any given nondegenerate homomorphism admits a unique extension $\bar{\pi}:M(A)\to M(B);$ there is a bit of fuss about composing degenerate homomorphisms, but at least when all of the maps are nondegnerate there is little risk of confusion. For more details one may refer to the appendix of \cite{boiler}. We will have occasion to work with degenerate homomorphisms. We work often with tensor products, and move freely among such isomorphisms as $A\otimes C_0(T) = C_0(T, A)$ for locally compact Hausdorff $T.$ 

 A \cs-correspondence is a triple $(B, X, A)$ comprising \cs-algebras $A, B$, a right Hilbert $A$-module $X$, and a left action of $B$ on $X$ formally given by nondegenerate homomorphism $\phi_B:B\to \adj(X).$ It is common to suppress the homomorphism, writing $b\cdot \xi$ instead of $\phi_B(b)(\xi)$, and we will follow this convention whenever convenient- retaining the notation $\phi_B$ to discuss compositions of homomorphisms. When $A=B,$ we further suppress notation and speak of the correspondence $(X, A)$. A \emph{homomorphism} of \cs-correspondences is a triple $(\rho, \psi, \pi):(B, X, A)\to (D, Y, C)$ where $\rho:B\to D$ and $\pi:A\to C$ are homomorphisms of the left and right coefficient algebras, and $\psi:X\to Y$ is a linear map compatible with the various correspondence operations. A correspondence homomorphism is \emph{nondegenerate} provided $\overline{\text{span} }(\psi(X)C)=Y$ and the coefficient homomorphisms are nondegenerate. Any \cs-algebra is a correspondence over itself, so it make sense to talk about a correspondence homomorphism into a \cs-algebra. Given any correspondence $(X, A)$, there is a \cs-algebra $\mathcal{T}_X$ which is universal for correspondence homomorphisms into it: a correspondence homomorphism into any other \cs-algebra $D$ induces a unique $*$-homomorphism from $\mathcal{T}_X$ into $D$.  

Given any correspondence $(B, X, A)$ it's \emph{multiplier correspondence} is the triple $(M(B), M(X), M(A))$, where $M(A)$ and $M(B)$ are the multiplier algebras, $M(X) = \adj(A,X)$ is the adjointable operators from $A$ to $X$, where $A$ is viewed as the \cs-module over itself, and the operations are given by pre/post composition. It is not hard to see that there is an inclusion $(B, X, A)\subseteq (M(B), M(X), M(A))$, and moreover any nondegenerate correspondence homomorphism $(\rho, \psi, \pi):(B, X, A)\to (D, Y, C)$ admits a unique extension to the multiplier correspondence. In addition to the algebra of all adjointable operators $\mathcal{L}(X)$ there is the \cs-subalgebra of \emph{compact operators} 
\[\mathbb{K}(X) = \overline{\text{span}}\{\theta_{\eta, \xi}\in \mathcal{L}(X): \xi,\eta\in X \text{ and } \theta_{\eta, \xi}(x) = \eta\cdot \langle \xi, x\rangle (a)\}.\]
One has $M(\mathbb{K}(X)) = \mathcal{L}(X)$. 

We now introduce the following objects associated to the correspondence homomorphism $( \psi, \pi):(X, A)\to (Y ,B)$ and an ideal $J\subset A$
\begin{align*}
M_B(Y)&=\{a\in \mathcal{L}(Y): aB\cup Ba\subset Y \}\\
M(B, J)&=\{a\in M(B): aJ\cup Ja\subset B\}\\
\psi^{(1)}&:\mathbb{K}(X)\to \mathcal{L}(Y); \psi^{(1)}(\theta_{\eta, \xi}) =\theta_{\psi(\eta), \psi(\xi)} 
\end{align*}
Regarding the last map $\psi^{(1)},$ we mention that the \emph{Cuntz-Pimsner} algebra of the correspondences $(X, A)$ is the quotient of $\mathcal{T}_X$ by the ideal generated by 
\[\{\psi^{(1)}(\phi_A(a))-i_A(a): a\in J_X\},\]
where $i_A$ is the inclusion map and $J_X$ is the \emph{Katsura ideal,} the largest ideal of $A$ on which the left action is an injective map into the compact operators. 

The following definition is essential to this work.
\begin{definition}[\cite{kqrfunctor}*{Definition 3.1}]\label{CPcovcor}
A correspondence homomorphism is \emph{Cuntz-Pimsner covariant} provided 
\begin{enumerate}[(i)]
\item $\psi(X)\subset M_B(Y)$
\item $\pi:A\to M(B)$ is nondegenerate
\item $\pi(J_X)\subset M(B; J_Y)$
\item The following diagram commutes 
\end{enumerate}
\begin{equation*}
\begin{tikzcd}
J_X\ar["\pi|"]{r}\ar["\phi_A|", swap]{d} & M(B; J_Y)\ar["\overline{\phi_B}|"]{d}\\
\bK(X)\ar["\psi^{(1)}", swap]{r} & M_B(\bK(Y)).\\
\end{tikzcd}
\end{equation*}
\end{definition}

Recall that an \emph{action} of a locally compact group $G$ on a \cs-algebra $A$ is a function $\alpha:G\times A\to A$, written $\alpha_t(a)=\alpha(t, a),$ such that for fixed $a\in A$, the function $t\mapsto \alpha_t(a)$ is continuous. Equivalently, an action may be described by a nondegenerate homomorphism $\widetilde{\alpha}:A\to M(A\otimes C_0(G))$ so that the diagram 
\begin{center}
\begin{tikzcd}
A \ar["\widetilde{\alpha}"]{r}\ar["\widetilde{\alpha}"]{d}& M(A\otimes C_0(G))\ar["\widetilde{\alpha}\otimes \id"]{d}\\
M(A\otimes C_0(G))\ar["\id\otimes m", swap]{r} & M(A\otimes C_0(G)\otimes C_0(G))
\end{tikzcd}
\end{center}
commutes, where $m:C_0(G)\to C_0(G)\otimes C_0(G)$ is described by $m(f)(s,t) = f(st)$. An action of $G$ on a correspondence $(X, A)$ comprises an action $\alpha$ on $A$, together with a map $\gamma:G\times X\to X$ such that for fixed $\xi\in X,$ the map $t\mapsto \gamma_t(\xi)$ is continuous, and for all $t, (\gamma_t, \alpha_t)$ is an invertible correspondence homomorphism.  

We will apply all of the above in the context of topological quivers, which we introduced immediately after Theorem \ref{thmhus}. Given a topological quiver $Q=(E^0, E^1, r, s, \lambda),$ one constructs a \cs-correspondences over $A=C_0(E^0)$ as follows: Set $X_0 = C_c(E^1)$, which is a bimodule over $A$ with left and right actions defined for $\xi\in X_0$ and $f\in A$ by 
\[(\xi f)(e) = \xi(e)f(s(e)) \qquad (f\xi)(e) = f(r(e))\xi(e).\] $X_0$ also admits an $A$-valued inner product given for $\xi, \eta\in X_0$ by 
\[\langle \eta, \xi \rangle(v) = \int \overline{\eta}(e)\xi(e)\,d\lambda_v(e), \]
and $X$ is the completion of $X_0$ with respect to the norm $\|\xi\|^2 = \|\langle \xi, \xi\rangle\|_A$. The \cs-algebra of a topological quiver $\cs(Q)$ is by definition the Cuntz-Pimsner algebra of this correspondence. 

\section{Actions on Quivers and Examples}\label{actionexample}

Let us introduce the notion of a group action on a topological quiver. This also appears in \cite{hskew}.
\begin{definition}
Let $Q = (E_0, E^1, r, s, \lambda)$ be a topological quiver. A locally compact group $G$ acts on $Q$ provided 
there are group actions on the vertex and edge spaces for which the range and source maps are both $G$-equivariant, and invariance for the $s$-system of measures in the following sense (\cite{toolkit}*{Section 3.2}): the given $G$-action on $E^1$ induces an action on $C_c(E^1)$ according to $\phi(e)\cdot g = \phi(e\cdot g),$ and we require that 
\[\int \phi(e\cdot g)\,d\lambda_{v}(e) = \int \phi(e)\,d\lambda_{v\cdot g}(e)\]
(where $\lambda_{v\cdot g}(E)$ is defined by $\lambda_v(E\cdot g)$) whenever $\phi\in C_c(E^1), e\in E^1, v\in E^0, $ and $g\in G.$ Such a group action is called \emph{principal} provided the action is principal on the vertex space. 
\end{definition}
It follows (see Theorem \ref{thmhus}) that such an action is also principal on the edge space. Said another way, a group action on a quiver $Q$ is a continuous homomorphism from $G$ to the group of automorphisms of $Q$.

\begin{definition}
    If $Q=\left(E^0, E^1, r, s, \lambda\right)$ is a topological quiver on which a second-countable locally compact group $G$ acts with quotient maps $q^0:E^0 \rightarrow E^0/G$ and $q^1:E^1 \rightarrow E^1/G$, then define the quotient quiver $Q/G=\left(E^0/G, E^1/G, \dot{r}, \dot{s}, \dot{\lambda} \right)$ such that for $f\in C_c\left(E^1/G\right)$,
    \[
    \dot{r}\left(q^1(e)\right)=q^0\left(r(e)\right),\qquad \dot{s}\left(q^1(e)\right)=q^0\left(s(e)\right)
    \]
    \[
    \int f(u)d\dot{\lambda}_{q^0(v)}(u) = \int (f\circ q^1)(e)d\lambda_v(e)
    \]
\end{definition}

Let $G$ be a group action on a topological quiver $Q = (E^0, E^1, r, s, \lambda)$. Whenever we have a free and proper action of $G$ on a quiver $Q$, we may form the quotient quiver $Q/G$ by \cite{hskew}*{Proposition 3.6}. 

\begin{example}
Let $Q=(E^0, E^1, r, s, \lambda)$ be a topological quiver, $G$ a second countable locally compact group, and $\kappa:E^1\to G$ any continuous function. The \emph{skew product quiver} is the ensemble 
\begin{align*}
Q\times_\kappa G &= (E^0\times G, E^1\times G, r_\times, s_\times, \lambda_\times )\\
s_\times(e, g) = (s(e),g)\qquad r_\times(e, g) = (r(e), &g\kappa(e)), \qquad \int_{E^1\times G}\xi(e, g)\,d\lambda_{\times v, g}(e, g) = \int_{E^1}\xi(e, g)\,d\lambda_{v}(e).
\end{align*}
In \cite{hskew}, the second author showed that skew product quivers are indeed topological quivers, and provides a refinement of Theorem \ref{Topologicalclassification} parallel to \cite{dkq}*{Corollary 3.7} in the case where the $G$-bundle over the vertex space is trivial. Skew product quivers admit principal actions by $G,$ and using techniques from nonabelian duality the second author deduced the Morita Equivalence \ref{ME} as a consequence of Katayama duality \cite{hskew}*{Corollary 5.3}. Throughout this section we introduce numerous concrete examples, including those which leave the context of skew product quivers. 
\end{example}

\subsection{Topological quivers coming from coset spaces}

Let $G$ be a locally compact topological group and $N$ be a closed subgroup. The quotient map $\pi_N:G\to N\backslash G$ is open and $N\backslash G$ has a group structure whenever $N$ is normal. 
Furthermore, since $N$ is closed in $G$, one can show that the function
\[
Ng\longmapsto \int_N\xi(ng)d\mu_N(n)
\]
belongs to $C_c(N\backslash G)$ for all $\xi\in C_c(G)$. Here $\mu_N$ is the right-invariant Haar measure on $N$. 
Therefore, we can define the following topological quiver.

\begin{definition}\label{cosetquiver}
    Let $G$ be a second-countable locally compact group, 
    let $N$ be a closed subgroup of $G$, and let $\phi:G\to G$ 
    be a continuous map. We define the {\em (right) coset quiver} $Q_{N< G}^\phi$ by the data 
    \[
Q_{N< G}^\phi = (N\backslash G, G,\;s,\;r,\;\lambda)
    \]
    according to the rules 
    \begin{align*}
 s(g):=Ng,\qquad & r(g):=N\phi(g),\\
   \lambda = \{\lambda_{Ng}\}_{Ng\in G/N},\qquad \int_{Ng} \xi(ng)d\lambda_{Ng}(ng)&:=\int_N\xi(ng)d\mu_N(n),\qquad \xi\in C_c(G),
    \end{align*}
    where $\mu_N$ is the right-invariant Haar measure on $N$. If $\phi(g)=gk$ for a fixed $k\in G$, we denote the coset quiver by $Q_{N<G}^k$. 
\end{definition}
\noindent Left coset quivers are defined analogously. For $H$ a closed subgroup, the quotient map $\pi_H:G\to G/H$ is open and the function
\[
gH\longmapsto \int_H \xi(gh)d\mu_H(h)
\]
belongs to $C_c(G/H)$ for all $\xi\in C_c(H)$, where $\mu_H$ is the left-invariant Haar measure on $H$. In the case of a closed normal subgroup, left and right coset quivers coincide.

As rudimentary examples, we recognize the Toeplitz algebra, the irrational rotation algebra, and the Cuntz algebras as Cuntz--Pimsner algebras of coset quivers.
\begin{example}[Toeplitz algebra]\label{toealg}
Let $G=\mathbb{Z}_2=\{1,-1\}$, let $N=\{1\}$, and let $\phi(1)=\phi(-1)=1$. Then $C^*(Q_{\{1\}<\mathbb{Z}_2}^\phi)\cong\mathcal{T}$, where $\mathcal{T}$ is the Toeplitz algebra, i.e. the unital $C^*$-algebra generated by an isometry.
\end{example}
\begin{example}[Irrational rotation algebra]
Let $G=\mathbb{T}$, let $N=\{1_\mathbb{T}\}$, and let $\phi(z)=e^{2\pi\theta i}z$ for all $z\in\mathbb{T}$, where $\theta\in(0,1)$ is irrational. Then the Cuntz-Pimsner algebra of $Q_{\{1\}<\mathbb{T}}^{\exp(2\pi i \theta) }$ is the irrational rotation algebra $A_\theta=C(\mathbb{T})\rtimes\mathbb{Z}$. In general, if $G$ is a locally compact group, $N=\{1_G\}$, and $\phi:G\to G$ is a homeomorphism, then $C^*(Q_{\{1_G\}<G}^\phi)\cong C_0(G)\rtimes_\phi\mathbb{Z}$ unless $\phi^n={\rm id}_G$ for some $n\in\mathbb{N}$. In the latter case, $C^*(Q_{\{1_G\}<G}^\phi)\cong C_0(G)\rtimes_\phi\mathbb{Z}_n$.

\end{example}
\begin{example}[Cuntz algebras] \label{oinfty}
    Let $G$ be a second-countable locally compact group and $N=G$ and $\phi(g)=g$ for all $g\in G$ in Definition~\ref{cosetquiver}. We get the following quiver
\[
Q_{G<G}^\phi = (\{1_G\}, G, r, s, \mu_G )
\]
where $r, s$ are constant maps and $\mu_G$ is the right-invariant Haar measure on $G$. We obtain $A_Q=C(\{1_G\})=\mathbb{C}$ and $X_Q=L^2(G)$. Since $G$ is second countable, $L^2(G)$ is separable. Therefore, the Cuntz-Pimsner algebra $C^*(Q_{G<G}^\phi)$ is isomorphic to $\mathcal{O}_{|G|}$ for finite $G$ and $\mathcal{O}_\infty$, i.e. the Cuntz algebra generated by countably infinitely many isometries, for infinite $G$.
\end{example}

In Example~\ref{toealg}, the only action of $\mathbb{Z}_2$ on $Q_{\{1\}<\mathbb{Z}_2}^\phi$ for which $\phi$ is equivariant is the action by multiplication on the edge space and the trivial action on the vertex space. However, the next proposition shows that coset quivers can often be equipped with free actions.
\begin{proposition}\label{cosetaction}
Let $G$, $N$, and $\phi$ be as in Definition~\ref{cosetquiver} and let $H$ be a closed subgroup of $G$ such that $Ng\cap gH=\{g\}$ for all $g\in G$. The formulas
\begin{equation}\label{hactquiv}
    (g,h)\longmapsto gh,\qquad (Ng,h)\longmapsto Ngh,
\end{equation}
define a continuous and free actions of $H$ on $G$ and $N\backslash G$. If $\phi$ is $H$-equivariant, the formulas~\eqref{hactquiv} induce a continuous and free action of $H$ on the coset quiver $Q_{N<G}^\phi$.
\end{proposition}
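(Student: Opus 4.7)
The plan is to verify three items in sequence: (i) the formulas define continuous free actions of $H$ on the spaces $G$ and $N\backslash G$; (ii) when $\phi$ is $H$-equivariant, the range and source maps of $Q_{N<G}^\phi$ are $H$-equivariant; and (iii) the $s$-system of measures is $H$-invariant in the sense required by the definition of a quiver action.

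For (i), right multiplication $(g,h)\mapsto gh$ on $G$ is continuous from joint continuity of the group operation, and free by right cancellation. The formula $(Ng, h) \mapsto Ngh$ is well-defined on $N\backslash G$ because left multiplication by $N$ commutes with right multiplication by any fixed $h$. Continuity descends from $G$ as follows: the quotient map $\pi_N:G\to N\backslash G$ is open (a standard fact for closed-subgroup quotients of locally compact groups), so $\pi_N\times\id_H$ is an open surjection from $G\times H$ onto $(N\backslash G)\times H$, hence a quotient map. Thus the continuous composition $G\times H\to G\xrightarrow{\pi_N} N\backslash G$ factors through $\pi_N\times \id_H$ to a continuous action $(Ng,h)\mapsto Ngh$. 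Freeness on the quotient is the single point where the hypothesis $Ng\cap gH=\{g\}$ enters: if $Ngh=Ng$ then $gh\in Ng$, and since trivially $gh\in gH$, we conclude $gh=g$, so $h=e$.

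For (ii) we compute $s(gh)=Ngh=s(g)\cdot h$, and, using $H$-equivariance of $\phi$, $r(gh)=N\phi(gh)=N\phi(g)h=r(g)\cdot h$. For (iii), the required identity $\int\xi(eh)\,d\lambda_{Ng}(e)=\int\xi(e)\,d\lambda_{(Ng)\cdot h}(e)$ is essentially tautological from the definition of $\lambda$: both sides unwind via the defining formula to $\int_N\xi(ngh)\,d\mu_N(n)$, with no appeal to invariance properties of $\mu_N$ required.

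The only potentially delicate point is organizing the continuity of the quotient-level action through the universal property of the open surjection $\pi_N\times\id_H$; after that, freeness is a one-line application of the coset disjointness hypothesis, and the equivariance and measure-invariance verifications are direct from the definitions of the coset quiver and of an $H$-equivariant map.
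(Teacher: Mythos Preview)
Your proposal is correct and follows essentially the same approach as the paper's proof: verify continuity and freeness of the two actions, then check equivariance of $r$ and $s$, then unwind the definition of $\lambda_{Ng}$ to confirm $H$-invariance of the $s$-system. The only notable difference is that you justify continuity of the action on $N\backslash G$ more carefully via the quotient-map property of the open surjection $\pi_N\times\id_H$, whereas the paper simply asserts continuity from continuity of multiplication.
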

Notice that the intersection condition for $H$ and $N$ is guaranteed if $N$ is a normal subgroup of $G$ and $N\cap H=\{e\}$. 
\begin{proof}
First, it is clear that the formulas~\eqref{hactquiv} define continuous actions, since multiplication is continuous, and that the $H$-action on the edge space $G$ is free. Concerning freeness of the $H$-action on the vertices $N\backslash G$, this follows from the assumption $Ng\cap gH=\{g\}$ for all $g\in G$ -- indeed, 
if $Ngh=Ng$, then there exist $n,n'\in N$ such that $ngh=n'g$. In turn, $gh=n^{-1}n'g\in Ng\cap gH$, which implies that $h=1_H$ and satisfies the first claim. 

When $\phi$ is $H$-equivariant, note that
\[
s(gh)=Ngh=(Ng)h=s(g)h\quad\text{and}\quad
r(gh)=N\phi(gh)=N\phi(g)h=(N\phi(g))h=r(g)h
\]
so the source and the range map are $H$-equivariant. Finally, we check whether the system $\{\lambda_{Ng}\}_{Ng\in N\backslash G}$ is $H$-invariant. To this end, note that if $\xi\in C_c(G)$, then denoting the right translation of $h\in H$ by $\rho_h$ we have $((\rho_h)(\xi))(g):=\xi(gh)$ for all $h\in H$ and $\rho_g(\xi)\in C_c(G)$. Since
\begin{align*}
\int_{Ng}\xi(ng\cdot h)d\lambda_{Ng}(ng)&=\int_{Ng}((\rho_h)(\xi))(ng)d\lambda_{Ng}(ng)=\int_N((\rho_h)(\xi))(ng)d\mu_N(n)\\
&=\int_{N}\xi(ngh)d\mu_N(n)=\int_{Ngh}\xi(ngh)d\lambda_{Ngh}(ngh),
\end{align*}
we obtain a continuous action of $H$ on the topological quiver $Q(G,N,\phi)$.
\end{proof}
\noindent Observe that our setup is symmetric, i.e. one could start with a left coset quiver using a closed subgroup $H$ and then for a closed subgroup $N$ such that $Ng\cap gH=\{g\}$ for all $g\in G$ there is a continuous free action given by the formulas
\begin{equation}\label{leftcoset}
(n,g)\longmapsto ng,\qquad (n,gH)\longmapsto ngH,\qquad g\in G,~n\in N.
\end{equation}
If $H$ is a closed subgroup of a topological group $G$, then its action on $G$ by right multiplication is always proper (see~\cite{cartan}) and 
the $H$-action on $N\backslash G$ is proper whenever $N=\{1_G\}$ or $H$ is compact. 
However, our next example shows that the action of $H$ on $N\backslash G$ described by Proposition~\ref{cosetaction} might fail to be proper.
\begin{example}
Consider the group extension
\[
0\longrightarrow \mathbb{Z}\longrightarrow\mathbb{R}\overset{p}{\longrightarrow}\mathbb{T}\longrightarrow 0,
\]
where the first homomorphism is the inclusion and $p(x):=e^{2\pi xi}$. Pick any irrational $\theta\in [0,1]$ and  consider the closed subgroup $H=\theta\mathbb{Z}$ of $\mathbb{R}$. Since $\theta\mathbb{Z}\cap\mathbb{Z}=\{0\}$, there is a free action of $\theta\mathbb{Z}$ on both $\mathbb{R}$ and $\mathbb{T}$ by Proposition~\ref{cosetaction} but the action of $\theta\mathbb{Z}$ on $\mathbb{T}$ is not proper (actions of locally compact non-compact groups on compact spaces cannot be proper). 
\end{example}

Equipped now with both a quiver and a free action on it by a group $H$ we may now study the quotient quiver. Let $G, N, \phi$ and $H$ satisfy the hypotheses of Proposition~\ref{cosetaction}, meaning $N$ and $H$ are subgroups of $G$, and $H$ acts on $Q_{N<G}^\phi$. Assume also that the $H$-action on this quiver is proper and let $\pi_N:G\to N\backslash G$ and $\pi_H:G\to G/H$ denote the quotient maps. Clearly, $\pi_N$ is $H$-equivariant and $\pi_H$ is $N$-equivariant for the $N$-actions on $G$ and $G/H$ given by the formulas~\eqref{leftcoset}.
Next, note that $(N\backslash G)/H\cong N\backslash(G/H)$ by the map
\[
(N\backslash G)/H\longrightarrow N\backslash(G/H),\qquad (Ng)H\longmapsto N(gH).
\] 
The spaces $(N\backslash G)/H$ and $N\backslash(G/H)$ are also homeomorphic to the double coset space $N\backslash G/H$.
The actions~\eqref{hactquiv} and~\eqref{leftcoset} give rise to the quotient maps
\[
\pi_{NH}:N\backslash G\longrightarrow N\backslash G/ H,\qquad Ng\longmapsto NgH,
\]
\[
\pi_{HN}:G/H\longrightarrow N\backslash G/H,\qquad gH\longmapsto NgH,
\]
and we have the following commuting diagram
\[
\begin{tikzcd}
G\arrow[r,"\pi_H"] \arrow[d,"\pi_N"'] & G/H \arrow[d,"\pi_{HN}"]\\
N\backslash G\arrow[r,"\pi_{NH}", swap] & N\backslash G/H.
\end{tikzcd}
\]
Any $H$-equivariant $\phi:G\to G$ induces a continuous $\dot{\phi}:G/H\to G/H,$ with $\pi_H\circ \phi = \dot{\phi}\circ \pi_H$ and we also have the following commuting diagram
\[
\begin{tikzcd}
G\arrow[r,"\pi_H"] \arrow[d,"\pi_N\circ\phi"'] & G/H \arrow[d,"\pi_{HN}\circ\dot{\phi}"]\\
N\backslash G\arrow[r,"\pi_{NH}", swap] & N\backslash G/H.
\end{tikzcd}
\]
Indeed, by $H$-equivariance of $\phi$, we have that
\[
\pi_{HN}\circ\dot{\phi}\circ\pi_H=\pi_{HN}\circ\pi_H\circ\phi=\pi_{NH}\circ\pi_{N}\circ\phi.
\]
The quotient quiver of $Q_{N<G}^\phi$ by $H$ is given by $(N\backslash G/H, G/H, \pi_{HN}\circ\dot{\phi}, \pi_{HN},\dot{\lambda})$, where
\[
\int_{\{ngH~:~n\in N\}}\xi(ngH)d\dot{\lambda}_{NgH}(ngH):=\int_N\xi(\pi_H(ng))d\lambda_{Ng}(ng)=\int_N\xi(\pi_H(ng))d\mu_N(n).
\]

\subsection{Coset quivers of semi-direct products and skew-products}
We now specialize our study of coset quivers to the setting of normal subgroups and semidirect product groups. If $G$ is a group with a normal subgroup $N$ and a subgroup $H$ such that $N\cap H=\{e\}$ and $NH=G$, then $G$ is isomorphic to the algebraic semi-direct product $N\rtimes H$ via the isomorphism $(n,h)\mapsto nh$~\cite{robinson}. 
Here the action of $H$ on $N$ is given by $n\mapsto hnh^{-1}$. If $G$ is a locally compact and $N$ and $H$ are closed subgroups, this algebraic isomorphism is a continuous bijection but it may fail to be open. 
However, it is a homeomorphism whenever $N\rtimes H$ is $\sigma$-compact \cite{hewittross}*{Theorem 5.29} (e.g. when $N$ and $H$ are second-countable). 
Since $N$ is normal, $N\cap H=\{e\}$, and $NH=G$, we obtain that $N\backslash G=G/N\cong H$. 
Hence, whenever $\phi$ is $H$-equivariant, the $H$-action on $Q_{N<G}^\phi$ given by Proposition~\ref{cosetaction} is the right-multiplication on the edges $G$ and on the vertices $H$, which is always proper.
The next results show that, under additional conditions on $\phi$, the coset quiver of a semi-direct product of groups can be realized as a skew-product quiver.
\begin{lemma}\label{semiskewlemma}
Let $G$ be a second-countable locally compact group 
and let $N$ and $H$ be closed subgroups of $G$ such that $N$ is normal, $N\cap H=\{e\}$, and $NH=G$. 
If $\phi(g)=c(n)h$ for every $g=nh\in G$, where $c:N\to H$ is a continuous map, then there is an isomorphism of topological quivers
\[
Q_{N<G}^\phi \cong Q_{N<N}^e\times_c H.
\]
Furthermore, if we equip $Q_{N<G}^\phi$ with the $H$-action given by Propostion~\ref{cosetaction} and the skew-product quiver $Q_{N<N}^e\times_c H$ with the canonical $H$-action, the isomorphism is $H$-equivariant.
\end{lemma}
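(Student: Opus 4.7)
The strategy is to exhibit an explicit isomorphism arising from the semi-direct product decomposition $G \cong N \rtimes H$. Since $G$ is second-countable and hence $\sigma$-compact, the continuous algebraic isomorphism $N \rtimes H \to G$, $(n,h) \mapsto nh$, is a homeomorphism by \cite{hewittross}*{Theorem~5.29}, and it descends to a homeomorphism $N\backslash G \to H$ via $Nh \leftrightarrow h$ (well-defined because $N \cap H = \{e\}$). I would use these identifications to define the candidate isomorphism: set $\Psi^0(Ne, h) := Nh$ on vertices and $\Psi^1(n, h) := nh$ on edges, both of which are homeomorphisms by construction.

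The bulk of the proof then consists in verifying that $\Psi := (\Psi^1, \Psi^0)$ respects all the quiver data. The source compatibility is immediate: $s(nh) = Nnh = Nh = \Psi^0(Ne, h) = \Psi^0 \circ s_\times(n, h)$. For the range compatibility, one uses the hypothesis $\phi(nh) = c(n)h$ to compute $r(nh) = N\phi(nh) = Nc(n)h$, which must be reconciled with $\Psi^0(r_\times(n, h)) = \Psi^0(Ne, hc(n)) = Nhc(n)$; the agreement here depends on careful coset manipulation using the normality of $N$ in $G$. For the $s$-systems, I would unwind both definitions to observe that each integral localizes to Haar measure on $N$: the system on $Q_{N<G}^\phi$ at $Nh$ sends $\xi \mapsto \int_N \xi(nh) \, d\mu_N(n)$, and the skew product system at $(Ne, h)$ gives the same value when pulled back along $\Psi^1$.

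For the final assertion of $H$-equivariance, I would use that Proposition~\ref{cosetaction} equips $Q_{N<G}^\phi$ with the $H$-action by right multiplication, while the canonical $H$-action on the skew-product quiver is right multiplication in the $H$-coordinate. Then the check reduces to $\Psi^1((n, h) \cdot h') = \Psi^1(n, hh') = n(hh') = (nh)h' = \Psi^1(n, h) \cdot h'$, together with the analogous computation on vertices, after which $H$-equivariance of the $s$-system follows from the Haar-measure formulation. I expect the main obstacle to be the range compatibility in the middle step: one has to carefully handle the order of multiplication of $c(n)$ and $h$ within the coset, and this is where the interaction between the semi-direct product structure, the normality of $N$, and the specific form of $\phi$ carries the content of the lemma.
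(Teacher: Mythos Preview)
Your approach mirrors the paper's almost exactly: the same candidate maps $\Psi^1(n,h)=nh$ and $\Psi^0(1_N,h)=Nh$, and the same sequence of checks for source, range, measures, and $H$-equivariance. The one genuine gap is in the range compatibility. You compute $\Psi^0(r_\times(n,h))=\Psi^0(Ne,hc(n))=Nhc(n)$ using the paper's stated skew-product convention $r_\times(e,g)=(r(e),g\kappa(e))$, and then assert that $Nhc(n)=Nc(n)h$ will follow from ``careful coset manipulation using the normality of $N$.'' It does not: the element $hc(n)(c(n)h)^{-1}=hc(n)h^{-1}c(n)^{-1}$ lies in $H$, and since $N\cap H=\{e\}$ the two cosets coincide only when $h$ and $c(n)$ commute in $H$. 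Normality of $N$ cannot repair this.

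The paper sidesteps the issue by silently using the opposite convention $r_\times(n,h)=(1_N,c(n)h)$ in the proof, under which the range check is the one-line identity $\psi^0(1_N,c(n)h)=Nc(n)h=N\phi(nh)=r(nh)$. This ``cocycle-on-the-left'' convention is in fact the one forced by requiring the canonical right $H$-action $(n,h)\cdot h'=(n,hh')$ to make $r_\times$ equivariant; with the convention you applied, $r_\times((n,h)h')=(1_N,hh'c(n))$ while $r_\times(n,h)h'=(1_N,hc(n)h')$, and these again differ unless $h'$ and $c(n)$ commute. So the fix is not a coset argument but the choice of convention: once you take $r_\times(e,g)=(r(e),\kappa(e)g)$, both the range compatibility and the $H$-equivariance go through immediately, and the remainder of your outline is correct and matches the paper.
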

\begin{proof}
First, note that $\phi(nh)=c(n)h$ is a well-defined continuous map. Indeed, $\phi$ is the composition of the continuous inverse to $(n,h)\mapsto nh$, the product $c\times{\rm id}_H:N\times H\to H\times H$, and the multiplication map of $G$.
Define the following maps
\[
\psi^1:(Q_{N<N}^e\times_c H)^1=N\times H\longrightarrow G=(Q_{N<G}^\phi)^1,\quad \psi^1(n,h)=nh,
\]
\[
\psi^0:(Q_{N<N}^e\times_c H)^0=\{1_N\}\times H \longrightarrow N\backslash G=(Q_{N<G}^\phi)^0,\quad \psi^0(1_N,h)=Nh.
\]
Since $G\cong N\rtimes H$, in light of the preceeding discussion we infer that both $\psi^1$ and $\psi^0$ are homeomorphisms. If we denote the source and range maps of the skew product by $s_\times$ and $r_\times$, respectively, then
    \[
\psi^0(s_\times(n,h))=\psi^0(1_N,h)=Nh=s(nh)=s(\psi^1(n,h)),
    \]
    \[
\psi^0(r_\times(n,h))=\psi^0(1_N,c(n)h)=Nc(n)h=N\phi(nh)=r(nh)=r(\psi^1(n,h)).
    \]
    Next, let $\{\lambda^\times_{(1_N,h)}\}_{h\in H}$ denote the system of measures on the skew-product. We have that:
    \begin{align*}
        \int_{Nh}\xi(nh)d\lambda_{Nh}(nh)&=\int_N\xi(nh)d\mu_N(n)=\int_N \xi(\psi^1(n,h))d\mu_N(n)\\
        &=\int_N\xi(\psi^1(n,h))d\lambda_{1_N}(n)=\int_{\{(n,h)~:~n\in N\}}\xi(\psi^1(n,h))d\lambda^\times_{(1_N,h)}(n,h),
    \end{align*}
    where $\xi\in C_c(G)$ and $ \mu_N$ is the right-invariant Haar measure on $N$. So the morphisms preserve measures and we have a quiver isomorphism.  Finally, we check whether $\psi^1$ and $\psi^0$ are $H$-equivariant:
    \[
    \psi^1((n,h)h')=\psi^1(n,hh')=nhh'=(nh)h'=\psi^1(n,h)h',
    \]
    \[
\psi^0((1_N,h)h')=\psi^0(1_N,hh')=Nhh'=(Nh)h'=\psi^0(1_N,h)h'.
    \]
    Therefore, we obtain an $H$-equivariant isomorphism of topological quivers.
\end{proof}
The above lemma implies the following result.
\begin{proposition}\label{semiskew}
    Let $G$, $N$, $H$, and $\phi$ be as in Lemma~\ref{semiskewlemma}. The quiver C*-algebra $C^*(Q_{N<G}^k)$ is isomorphic to the coaction-crossed-product \cs-algebra $C^*(Q_{N<N}^e)\rtimes_\delta H$, where the coaction $\delta$ is obtained from $\phi$ as in~\cite{hskew}. 
\end{proposition}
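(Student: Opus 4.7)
The plan is to assemble the proposition by chaining two functorial facts. The first is supplied by Lemma \ref{semiskewlemma}: under the hypotheses, there is an $H$-equivariant isomorphism of topological quivers
\[
Q_{N<G}^\phi \;\cong\; Q_{N<N}^e \times_c H.
\]
The second is the skew-product-to-coaction-crossed-product theorem from \cite{hskew}, which realizes the \cs-algebra of any skew product quiver $Q \times_\kappa G$ as a coaction crossed product $\cs(Q) \rtimes_\delta G$, with the coaction $\delta$ built from the cocycle data on the quiver. Combining these two identifications should immediately yield the desired isomorphism.

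The first step is to promote the quiver isomorphism of Lemma \ref{semiskewlemma} to an isomorphism of \cs-algebras
\[
\cs\bigl(Q_{N<G}^\phi\bigr) \;\cong\; \cs\bigl(Q_{N<N}^e \times_c H\bigr).
\]
Since the isomorphism $(\psi^1,\psi^0)$ from Lemma \ref{semiskewlemma} is a morphism of topological quivers which preserves the $s$-systems of measures, the general functoriality discussion in Section \ref{prelim} (in particular, the fact that such a quiver isomorphism induces an isomorphism of the underlying \cs-correspondences, and hence of their Cuntz--Pimsner algebras) applies verbatim. One has to keep straight that this induced correspondence isomorphism is indeed Cuntz--Pimsner covariant in the sense of Definition \ref{CPcovcor}, but for an \emph{isomorphism} of quivers this is automatic: both the left action and the inner product are preserved, so the Katsura ideals match and the covariance diagram trivially commutes.

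The second step is to substitute the skew-product description of $\cs(Q_{N<N}^e \times_c H)$. The second author's companion article \cite{hskew} shows that for a topological quiver $Q$ with a continuous cocycle $\kappa : E^1 \to H$, the quiver \cs-algebra of the skew product carries a canonical coaction $\delta$ of $H$, and
\[
\cs(Q \times_\kappa H) \;\cong\; \cs(Q) \rtimes_\delta H.
\]
Specializing this to $Q = Q_{N<N}^e$ and $\kappa = c$, and composing with the isomorphism from the previous step, delivers
\[
\cs\bigl(Q_{N<G}^\phi\bigr) \;\cong\; \cs\bigl(Q_{N<N}^e\bigr) \rtimes_\delta H,
\]
which is the proposition (with the coaction $\delta$ being precisely the one constructed in \cite{hskew} from $\phi$ via the cocycle $c$).

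The main obstacle is really a bookkeeping one: namely, verifying that the coaction produced by the skew-product construction in \cite{hskew} applied to the cocycle $c$ coincides, after transport through the quiver isomorphism of Lemma \ref{semiskewlemma}, with the coaction $\delta$ that \cite{hskew} attaches directly to $\phi$. This amounts to checking that the two natural ways of extracting cocycle data from the semi-direct structure agree, which follows from the explicit formula $\phi(nh)=c(n)h$ together with the identification $G \cong N \rtimes H$; I expect no genuine difficulty, only a careful matching of conventions.
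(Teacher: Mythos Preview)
Your proposal is correct and follows precisely the approach the paper intends: the paper offers no explicit proof, stating only that ``the above lemma implies the following result,'' which is exactly the two-step composition you spell out (Lemma~\ref{semiskewlemma} giving the quiver isomorphism, then the skew-product/coaction-crossed-product identification from \cite{hskew}). Your additional remarks on Cuntz--Pimsner covariance for quiver isomorphisms and on matching the coaction conventions are reasonable elaborations of what the paper leaves implicit.
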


\begin{example}[Dihedral groups]\label{dihedral}
Let $D_n=\langle a,b~|~a^2=b^n=1,~aba=b^{-1}\rangle$ be the dihedral group, i.e. the group of symmetries of the regular $n$-gon, $n\geq 1$. From this presentation of $D_n$ it is clear that $D_n\cong\mathbb{Z}_n\rtimes\mathbb{Z}_2$. Since all groups in sight are finite, by Proposition~\ref{semiskew} and Example~\ref{oinfty}, we obtain isomorphisms
\[
C^*(Q_{\mathbb{Z}_2< D_n}^a)\cong C^*(Q_{\mathbb{Z}_2<\mathbb{Z}_2}^1)\rtimes_\delta\mathbb{Z}_2\cong\mathcal{O}_n\rtimes_\delta\mathbb{Z}_2\,,\qquad n\geq 1.
\]
\end{example}
\begin{example}[Orthogonal groups]\label{ex3.14}
    Let $O(n)$ be the orthogonal group in dimension $n\geq 2$. It is known that $O(n)\cong SO(n)\rtimes\mathbb{Z}_2$, where $\mathbb{Z}_2=\{1,-1\}$ is identified with $\{I,R\}$, where $I$ is the identity matrix and $R={\rm diag}(1,-1,\ldots,-1)$. Let $\phi:O(n)\to O(n)$ be defined by the formula $\phi(M)=MR$. The source and the range maps read 
    \[
    s(M)=\det(M)\qquad\text{and}\qquad r(M)=SO(n)MR.
    \]
    Lemma~\ref{semiskewlemma} and Proposition~\ref{semiskew} apply so that we have the following isomorphisms:
    \[
    C^*(Q_{SO(n)<O(n)}^R)\cong C^*(Q_{SO(n)<SO(n)}^I)\rtimes_\delta\mathbb{Z}_2\cong\mathcal{O}_\infty\rtimes_\delta\mathbb{Z}_2.
    \]
\end{example}

\begin{example}[Euclidean groups]\label{euclidean}
Let $E(n)$ be the group of isometries of the $n$-dimensional Euclidean space. It is well-known that $E(n)\cong \mathbb{R}^n\rtimes O(n)$, where $\mathbb{R}^n$ and $O(n)$ correspond to the subgroups of translations and rotations, respectively. If $k\in O(n)$, then
\[
C^*(Q_{\mathbb{R}^n<E(n)}^k)\cong C^*(Q_{\mathbb{R}^n<\mathbb{R}^n}^1)\rtimes_\delta O(n)\cong \mathcal{O}_\infty\rtimes_\delta O(n).
\]
Since $Q_{\mathbb{R}^n<\mathbb{R}^n}^0$ is a bonefide topological quiver, the coset quivers associated to Euclidean groups are genuine examples of quivers which cannot be recovered as topological graphs. Notice also that this coaction cannot be witness as an action of the dual group because $O(n)$ is nonabelian.
\end{example}

The following example demonstrates that a coset quiver need not occur as a skew product. However, this fine example also shows a connection to skew products reminiscent of local trivality in the topological setting. 

\begin{example}\label{qr}
Consider the nontrivial principal $\mathbb{Z}$-bundle 
\[p:\mathbb{R}\to \mathbb{T}\qquad x\mapsto e^{2\pi i x},\]
the $\mathbb{Z}$-equivariant homeomorphisms
\begin{align*}
\phi:\mathbb{R}\to \mathbb{R}&\qquad x\mapsto x+1/2\\
\dot{\phi}:\mathbb{T}\to \mathbb{T}&\qquad z\mapsto -z,
\end{align*}
and then the coset quivers $Q_\mathbb{R}:= Q_{\{0\}<\mathbb{R}}^\phi$ and $Q_\mathbb{T}:= Q_{\{1\}<\mathbb{T}}^{\dot{\phi}}$. Because the normal subgroup is the trivial group, the each of the source maps is a homeomorphism and both examples are topological graphs. $Q_\mathbb{R}$ admits a principal $\mathbb{Z}$-action for which the associated quotient quiver is $Q_\mathbb{T}$, and in fact the surjective $\mathbb{Z}$-equivariant quiver morphism is given by $(p,p)$. Since $p$ is a nontrivial $\mathbb{Z}$-bundle over $\mathbb{T}$, we infer that $Q_\mathbb{R}$ is not a skew product quiver coming from \cite{hskew}, however we will soon see that $Q_\mathbb{R}$ is not far off from being a skew product.  

Consider the following cover of $\mathbb{T}$ which is labeled according to the cardinal directions: 
\begin{align*}
N=\{(z_0,z_1)~:~z_1>0\},&\qquad W=\{(z_0,z_1)~:~z_0>0\},\\
S=\{(z_0,z_1)~:~z_1<0\},&\qquad E=\{(z_0,z_1)~:~z_0<0\}.
\end{align*}
Denoting by $U_0=N\cup S = \mathbb{T}\setminus \mathbb{R}$ the union of the boreal and austral semicircles (with eastern and western antipodal poles omitted), and similarly $U_1 = E\cup W = \mathbb{T}\setminus i\mathbb{R}$ the union of the oriental and occidental semicircles (with the northern and southern antipodal poles removed), we see that the set $\{U_0, U_1\}$ determines an open trivializing cover of $\mathbb{T}$ invariant under $\dot{\phi}$. Then one may easily check that each of $Q_{U_i} = (U_i, U_i, \id, \dot{\phi}, \lambda)$ is a topological quiver which we'll regard (intuitively but imprecisely) as a ``subquiver'' with the data inherited from $Q_\mathbb{T}$. 

Let's follow a similar procedure to define an open trivializing cover of $\mathbb{R}$ invariant under $\phi.$ First, identify the following open intervals of $\mathbb{R}$: 
\[N_0 = (0, 1/2),\quad  S_0 =  (1/2, 1),\quad  E_0 = (-1/4, 1/4),  \text{ and }W_0 = (1/4, 3/4).\]
We will also denote left translation of these sets by labels exemplified by $N_k = (0+k, 1/2+k)$. We have the following set equalities 
\begin{align*}
p^{-1}(U_0)&=\bigsqcup_{k\in\mathbb{Z}}N_k\sqcup S_k,\\
p^{-1}(U_1)&=\bigsqcup_{k\in\mathbb{Z}}E_k\sqcup W_k,
\end{align*}
together with homeomorphisms
\begin{align*}
\Psi_0:p^{-1}(U_0)&\longrightarrow U_0\times\mathbb{Z},\qquad \Psi_0(x)=(p(x),n)\quad\text{for $x\in N_n\sqcup S_n$},\\
\Psi_1:p^{-1}(U_1)&\longrightarrow U_1\times\mathbb{Z},\qquad \Psi_1(x)=(p(x),n)\quad\text{for $x\in E_n\sqcup W_n$}.
\end{align*}
As in the case above concerning $Q_{U_i}$ and $Q_\mathbb{T}$, one easily checks that $Q_{p\inv(U_i)} = (p\inv(U_i), p\inv(U_i), \id, \phi, \lambda)$ is a topological quiver with the maps inherited from $Q_\mathbb{R}$ by restriction. 

Next, define two continuous maps 
\begin{align*}
c_0:U_0&=N_0\sqcup S_0\longrightarrow\mathbb{Z},\qquad c_0(z)=\begin{cases}
    0 & z\in N_0,\\
    1 & z\in S_0,
\end{cases}\\
c_1:U_1&=E_0\sqcup W_0\longrightarrow\mathbb{Z},\qquad c_1(z)=\begin{cases}
    0 & z\in E_0,\\
    1 & z\in W_0,
\end{cases}
\end{align*}
and regard each $c_i$ as a cocycle into $\mathbb{Z}$ defined on the edge space of $Q_{U_i}$. Then we may define the skew product quivers 
\[Q_{U_i}\times_{c_i}\mathbb{Z} = (U_i\times \mathbb{Z}, U_i\times \mathbb{Z}, r_\times, s_\times, \lambda_\times),\]
where the source maps $s_\times$ are the identity and the range maps are given by $r_\times(e, n) = (-e,n+c_i(e)).$

We claim that there is a quiver isomorphism between $Q_{p\inv(U_i)}$ and $Q_{U_i}\times _{c_i}\mathbb{Z}.$ Indeed, for $i=0,$ let $x\in N_n\sqcup S_n$, and we check separately that if $x\in N_n$, then
\[
\Psi_0(r_E(x))=\Psi_0(x+1/2)=(p(x+1/2),n)=(-p(x),n+c_0(p(x)))=r_\times(p(x),n))=r_\times(\Psi_0(x)).
\]
Likewise, if $x\in S_n$, then
\[
\Psi_0(r_E(x))=\Psi_0(x+1/2)=(p(x+1/2),n+1)=(-p(x),n+c_0(p(x)))=r_\times(p(x),n)=r_\times(\Psi_0(x)).
\]
Since the computations with concerning the source maps are trivial (the sources are identity maps) and the measures involved are all point masses, $(\Psi_0, \Psi_0)$ is the desired quiver isomorphism.  Analogous computations hold for $c_1$ and $(\Psi_1, \Psi_1)$. Thus, even though there is a topological obstruction keeping $Q_\mathbb{R}$ from being a skew product quiver, $Q_\mathbb{R}$ is still ``covered'' locally by the skew products $Q_{p\inv(U_i)}$. We also mention that since $\phi$ and $\dot{\phi} $ are homeomorphisms, we have 
\[\cs(Q_\mathbb{R})\cong C_0(R)\rtimes_\phi \mathbb{Z} \quad \text{and} \quad \cs(Q_\mathbb{T})\cong C(\mathbb{T})\rtimes_{\dot\phi}\mathbb{Z}_2. \]

\end{example}

As a final offering of this subsection, we'll produce a genuine topological quiver which admits a principal action by a locally compact group $H$ and for which the vertex action is not trivial. Thus, this coset quiver is not a skew product, and can only be studied using the techniques introduced in this article. 

\begin{example}\label{kickass}
Take any two nondiscrete Lie groups $K$ and $N,$ so that $N$ admits an action by $K$, let $G$ be the semidirect product, $\phi:G\to G$ a continuous map, and $Q_{N<G}^\phi$ the associated coset quiver. Notice that topologically, $G=N\times K$. Since this construction works for any topological group $K,$ we may arrange for $K$ to admit an action by another nondiscrete group $H$ so that $K\to K/H$ is a nontrivial bundle. Since the only matter of interest concerning $G$ is it's topology, the action of $H$ on $K$ lifts to a (principal, nontrivial) action on $G$. Thus,the action of $H$ will determine an action on the coset quiver $Q_{N<G}^\phi$ provided that $\phi$ is also equivariant for the $H$-action. This is easily accomplished: one may take, for example, any map $\phi':N\to N$ and take $\phi = \phi'\times \id,$ although surely many other choices are available. 
Now, because the map $K\to K/H$ is a nontrivial bundle, the associated quiver cannot be witnessed as a skew product by $H$. Moreover, the source map for $Q_{N<G}^\phi$ is not a local homeomorphism because the fibres, each homeomorphic with $N$, are not discrete. Consequently the quiver under investigation is a quiver which is not a topological graph. 
The associated quotient quiver is $(K/H, N\times (K/H), \id,\dot{\phi}, \lambda )$ where the measures are still taken from the haar measures on $N.$ 
\end{example}

\subsection{Non-split extensions of groups} It is well known that semi-direct products of the form $G=N\rtimes H$ correspond to split group extensions~(e.g. see~\cite[Lemma~7.20]{rotman}), i.e. short exact sequences of groups
\[
1\longrightarrow N\longrightarrow G\longrightarrow G/N\longrightarrow 1
\]
that split. If $N$ is a closed normal subgroup of a second-countable locally compact group $G$, then we always obtain a group extension as above. Although this extension might not split, it will give rise to a coset quiver $Q_{N<G}^\phi$ for any continuous $\phi:G\to G$.  

\begin{example}[Binary octahedral group]
    Let $1$, $i$, $j$, and $k$ be the quaternionic units. The {\em binary octahedral group} $2O$ is a multiplicative group of order $48$ with the following elements
    \[
    \pm 1,\pm i,\pm j,\pm k,\qquad \text{($8$ elements)}\qquad
    \frac{\pm 1\pm i\pm j\pm k}{2},\qquad\text{($16$ elements)}
    \]
    \[
    \frac{\pm 1\pm i}{\sqrt{2}},\frac{\pm 1\pm j}{\sqrt{2}},\frac{\pm 1\pm k}{\sqrt{2}},\frac{\pm i\pm j}{\sqrt{2}},\frac{\pm i\pm k}{\sqrt{2}},\frac{\pm j\pm k}{\sqrt{2}}.\qquad\text{($24$ elements)}
    \]
    There is a well-known non-split extension
    \[
    1\longrightarrow \mathbb{Z}_2\longrightarrow 2O\longrightarrow S_4\longrightarrow 1,
    \]
where $\mathbb{Z}_2$ is the normal subgroup $\{1,-1\}$ of $2O$ and the quotient $2O/\mathbb{Z}_2$ is isomorphic to the symmetric group $S_4$. In other words, $2O$ is a non-trivial double cover of $S_4$. It is straightforward to verify that the element
\[
\frac{-1-i-j-k}{2}
\]
generates a subgroup isomorphic with $\mathbb{Z}_3$. Note also that
\[
\mathbb{Z}_2\cap\mathbb{Z}_3=\{1,-1\}\cap \left\{1,\frac{-1-i-j-k}{2},\frac{-1+i+j+k}{2}\right\}=\{1\}.
\]
Therefore, by Proposition~\ref{cosetaction}, $\mathbb{Z}_3$ acts freely on $S_4$. This action is automatically proper since $\mathbb{Z}_3$ is finite. 

There are many coset quivers associated with the above non-split extension. 
\begin{enumerate}
    \item First, we consider $Q(2O,\mathbb{Z}_2,k)$, where $k$ is a quaternionic unit. It is clear that $Q(2O,\mathbb{Z}_2,k)$ has 12 connected components, each isomorphic to $Q(D_2,\mathbb{Z}_2,a)\cong Q(\mathbb{Z}_2,\mathbb{Z}_2,1)\times_c\mathbb{Z}_2$ (see Example~\ref{dihedral}).
    Therefore, We conclude that $C^*(Q(2O,\mathbb{Z}_2,k))\cong \bigoplus_{i=1}^{12}\mathcal{O}_2\rtimes_\delta\mathbb{Z}_2$. As mentioned above, there is a free and proper action of $\mathbb{Z}_3$ on $Q(2O,\mathbb{Z}_2,k)$ which moves a given connected component to a different connected component and has $4$ orbits. Much in the same way, the induced action on $C^*(Q(2O,\mathbb{Z}_2,k))$ operates on the direct summands.
    \item Next, consider
    \[
    Q\left(2O,\mathbb{Z}_2,\frac{-1-i-j-k}{2}\right)
    \]
    which has $8$ connected components, each isomorphic to $Q(D_3,\mathbb{Z}_2,a)\cong Q(\mathbb{Z}_3,\mathbb{Z}_3,1)\times_c\mathbb{Z}_2$.
\end{enumerate}
Next, we consider a different non-split extension
\[
0\longrightarrow Q_8\longrightarrow 2O\longrightarrow S_3\longrightarrow 0,
\]
where $Q_8$ is the quaternion group isomorphic to the subgroup $\{\pm1,\pm i,\pm j,\pm k\}$ of $2O$ such that $2O/Q_8\cong S_3$. Again, $Q_8\cap\mathbb{Z}_3=\{1\}$, so $\mathbb{Z}_3$ acts freely and properly on $S_3$.
\end{example}


There is also the binary icosahedral group $2I$, which is another example of a nonsplit extension of the icosahedral group. Both  $2I$ and $2O$ are important discrete groups for studying quantum error correction \cite{ktexotic}. These examples are discrete analogs of the more general spin groups, which are the universal covers of the special orthogonal groups $SO(n)$. 

\subsection{Topological group relations}
In~\cite{mccann}, the author introduced the notion of a topological group relation quiver that we now recall.
\begin{definition}
Let $G$ be a second-countable locally compact group and let $\alpha$ and $\beta$ be continuous endomorphisms of $G$. We define the {\em topological group relation quiver} $Q_{\alpha,\beta}(G)$ as follows
\begin{align*}
    Q^1_{\alpha,\beta}(G)&:=\{(g,h)\in G\times G~:~\alpha(g)=\beta(h)\},\qquad Q^0_{\alpha,\beta}(G):=G,\\
s(g,h):=g,\qquad &r(g,h):=h,\qquad \int\xi(g,h)d\lambda_v(g,h):=\int_{\ker \beta}\xi(v,ky)d\mu(k),
\end{align*}
where $\mu$ is a right-invariant Haar measure on $\ker \beta$ and $y \in \beta^{-1}(\alpha(v))$. If $\beta^{-1}(\alpha(v))=\emptyset$, then we define $\lambda_v=0$, since in that case $s^{-1}(v)=\emptyset$.
\end{definition}
\noindent Because $s^{-1}(v)=\{g\}\times \beta^{-1}(\alpha(v))$ and $\beta^{-1}(\alpha(v))=h\ker\beta$ for any $h\in \beta^{-1}(\alpha(v))$, the system of measures $\{\lambda_v\}_{v\in G}$ is well defined.

Next, we construct a continuous, free, and proper action on $Q_{\alpha,\beta}(G)$. Consider the subset
\begin{equation}\label{relsubgrp}
A:=\{g\in G~:~\alpha(g)=\beta(g)\}\subseteq G.
\end{equation}
We claim that $A$ is a closed subgroup of $G$. Indeed, the following computations
\[
\alpha(gh)=\alpha(g)\alpha(h)=\beta(g)\beta(h)=\beta(gh),\qquad g,h\in A,
\]
\[
\alpha(g^{-1})=\alpha(g)^{-1}=\beta(g)^{-1}=\beta(g^{-1}),\qquad g\in A,
\]
show that $A$ is a subgroup (here we used the fact that $\alpha$ and $\beta$ are homomorphisms). Furthermore, continuity of $\alpha$ and $\beta$ implies that $A$ is closed.

\begin{proposition}
    Let $G$ be a second-countable locally compact group, let $Q_{\alpha,\beta}(G)$ be its topological group relation quiver, and let $A$ be the closed subgroup of $G$ given by~\eqref{relsubgrp}. The formulas
    \[
    ((g,h),a)\longmapsto (ga,ha),\qquad (g,a)\longmapsto ga,
    \]
    define a continuous, free, and proper action of $A$ on $Q_{\alpha,\beta}(G)$.
\end{proposition}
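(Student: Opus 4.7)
The plan is to verify that the stated formulas define continuous actions on the vertex and edge spaces of $Q_{\alpha,\beta}(G)$, that they respect the source, range, and measure data so as to constitute an action on the quiver, and finally that the resulting action is both free and proper.

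The action on vertices is simply the restriction of right multiplication in $G$, so continuity and the group-action axioms are automatic. For the edge action, I would first check well-definedness: given $(g,h)\in Q^1_{\alpha,\beta}(G)$ and $a\in A$, the computation $\alpha(ga) = \alpha(g)\alpha(a) = \beta(h)\beta(a) = \beta(ha)$ uses only that $\alpha,\beta$ are homomorphisms and $\alpha(a)=\beta(a)$. Continuity then reduces to continuity of coordinatewise multiplication in $G$. Source/range equivariance is immediate: $s((g,h)\cdot a) = ga = s(g,h)\cdot a$ and likewise for $r$. For measure invariance, I would fix $\phi\in C_c(Q^1_{\alpha,\beta}(G))$, $v\in G$, $a\in A$, and a representative $y\in \beta^{-1}(\alpha(v))$. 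The left-hand side of the invariance identity then unfolds to $\int_{\ker\beta} \phi(va, kya)\,d\mu(k)$. For the right-hand side, I would observe that $\alpha(va)=\beta(y)\beta(a)=\beta(ya)$, so $ya$ is a legitimate representative of $\beta^{-1}(\alpha(va))$; using it produces precisely the same integral.

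Freeness on both spaces follows from cancellation in $G$. For properness, the vertex-space action is right multiplication by the closed subgroup $A$, which is proper by the classical result already cited in the paper. For the edge action, I would argue that the diagonal action of $A$ on $G\times G$ is proper by factoring the action map $((g,h),a)\mapsto((g,h),(ga,ha))$ through the closed embedding $((g,h),a)\mapsto ((g,a),(h,a))$ into $(G\times A)\times(G\times A)$ (closed because it is the preimage of the diagonal of $A\times A$, and $A$ is Hausdorff) followed by the product of the two proper action maps on each factor. The induced action on the closed $A$-invariant subspace $Q^1_{\alpha,\beta}(G)\subset G\times G$ is then proper by restriction.

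I expect the main obstacle to be this last properness argument on the edge space, since measure invariance and freeness reduce to short symbolic computations while properness requires upgrading from a single proper action to the diagonal action on $G\times G$ before restricting to the closed edge subspace. Every other step is a routine bookkeeping verification.
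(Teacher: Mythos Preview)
Your proposal is correct and follows essentially the same route as the paper: well-definedness of the edge action via the homomorphism property of $\alpha,\beta$, source/range equivariance, measure invariance using $ya$ as the representative in $\beta^{-1}(\alpha(va))$, freeness by cancellation, and properness on vertices from right translation by a closed subgroup. The only difference is in the properness argument on the edge space: the paper cites \cite{palais}*{Proposition 1.3.3} for properness of the diagonal $G$-action on $G\times G$ and then \cite{palais}*{Proposition 1.3.1} to restrict to the closed subgroup $A$ and the closed invariant subspace $Q^1_{\alpha,\beta}(G)$, whereas you give a direct factorization through a closed embedding and a product of proper action maps (implicitly composed with a coordinate permutation to match the target). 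Both arguments are valid and rest on the same underlying fact; yours simply unpacks the Palais citations.
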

\begin{proof}
First, since $A$ is a closed subgroup of $G$, the action of $A$ on $Q^0_{\alpha,\beta}(G)$ is continuous, free, and proper. Next, the $A$-action on $Q^1_{\alpha,\beta}(G)$ is well defined. Indeed, for any $a\in A$ and $(g,h)\in Q^1_{\alpha,\beta}(G)$, we have that
\[
\alpha(ga)=\alpha(g)\alpha(a)=\beta(h)\beta(a)=\beta(ha).
\]
Assume that $(ga,ha)=(g,h)$. Then $ga=g$ and $ha=h$, which implies that $a=e$. Hence, the $A$-action on the edge space is free. To see that it is also proper,  note that the action of $G$ on $G\times G$ given by the formula $((g,h),a)\mapsto (ga,ha)$ is proper by~\cite[Proposition~1.3.3]{palais}. Then, from~\cite[Proposition~1.3.1]{palais} we infer that the restriction of this action to an action of a closed subgroup $A$ on an $A$-invariant closed subspace $Q^1_{\alpha,\beta}(G)$  is again proper. Furthermore, the system $\{\lambda_v\}_{v\in G}$ is $A$-invariant:
\[
\int\xi(ga,ha)d\lambda_v(g,h)=\int_{\ker\beta}\xi(va,kya)d\mu_G(k)=\int_{s^{-1}(va)}\xi(g,h)d\lambda_{va}(g,h).
\]
Here we used the fact that $ya\in \beta^{-1}(\alpha(va))$ for every
$y\in\beta^{-1}(\alpha(v))$. Finally, the source and the range maps are $A$-equivariant so we obtain a continuous, free, and proper $A$-action on $Q_{\alpha,\beta}(G)$.
\end{proof}

\begin{example}
Let $G=\mathbb{T}$. Every continuous endomorphism of $\mathbb{T}$ is of the form $z\mapsto z^n$ for some $n\in\mathbb{Z}$. Consider the topological group relation quiver $Q_{n,m}(\mathbb{T})$ with $n\geq m > 0$. Then the subgroup $A$ of $\mathbb{T}$ defined by~\eqref{relsubgrp} is isomorphic to
\[
A=\{z\in\mathbb{T}~:~z^n=z^m\}=\{z\in\mathbb{T}~:~z^{n-m}=1\}\cong \mathbb{Z}_{n-m}.
\]

Since $\ker \beta\cong \mathbb Z_m$ is discrete, the right-invariant Haar measure $\mu$ is the counting measure, and $Q_{n,m}(\mathbb{T})$ is a topological graph. Let $\xi\in C\left(Q_{n,m}^1\right)$ be defined by $\xi(z,w)=\overline{z}w$, and let $\eta$ denote the identity function on $\mathbb{T}$ as an element of $C\left(Q_{n,m}^0\right)$. Suppose $X$ is the correspondence associated with $Q_{n,m}$ and $(\psi,\pi)$ is the universal covariant representation of $X$.

By~\cite[Corollary~3.310]{mccann}, the Cuntz-Pimsner algebra of $Q_{n,m}(\mathbb{T})$ is isomorphic to the unital universal $C^*$-algebra generated by the elements $U$ and $S$ satisfying the following relations:
\[
U^*U=UU^*=\sum_{k=0}^{n-1}U^kSS^*U^{-k}=1,\qquad S^*U^kS=\delta_{0,k},\quad 0\leq k\leq n-1,\qquad U^nS=SU^m.
\]
where $\delta_{n,m}$ is the Kronecker symbol. The isomorphism is given by $\psi(\xi)\mapsto S$ and $\pi(\eta)\mapsto U$. By \ref{ME}, $A$ acts on $C^*(S,U)$. From the action of $a\in A$ on $\xi$ and $\eta$, we deduce that $a$ fixes $S$ and $U\cdot a=aU$.
\end{example}

\section{Topological Aspects}\label{topaspects}

Given locally compact Hausdorff spaces $T, S$ and $f\in C_0(T\times S)$, we denote for any $s\in S$ the function $f_s\in C_0(T)$ defined by $f_s(t) = f(t, s),$ and call $f_s$ the cross section along $s$.
\begin{lemma}\label{convergenceofcrosssections}
    Let $X,Y,Z$ be second countable locally compact Hausdorff spaces,
    $s:X\rightarrow Y$ and $q:Z\rightarrow Y$ continuous, and $\left(\lambda_{y}\right)_{y\in Y}$
    an $s$-system. Suppose $f\in C_c(X\times Z)$ and  $v_{i}$ is a net
    in $Z$ converging to $v\in Z$. Then
    \[
    \int f_{v_i}(a)\:d\lambda_{q\left(v_{i}\right)}(a)\rightarrow\int f_v\left(a\right)\:d\lambda_{q(v)}(a)
    \]
\end{lemma}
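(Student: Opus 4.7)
The natural approach is a triangle-inequality decomposition exploiting the continuity that is already built into an $s$-system. I would write
\[
\int f_{v_i}\,d\lambda_{q(v_i)} - \int f_v\,d\lambda_{q(v)} \;=\; \int (f_{v_i}-f_v)\,d\lambda_{q(v_i)} \;+\; \left(\int f_v\,d\lambda_{q(v_i)} - \int f_v\,d\lambda_{q(v)}\right),
\]
and argue each summand tends to zero separately. The second summand is immediate: since $f_v\in C_c(X)$, the defining property of an $s$-system guarantees that $y\mapsto \int f_v\,d\lambda_y$ is continuous on $Y$, and continuity of $q$ promotes the hypothesis $v_i\to v$ to $q(v_i)\to q(v)$, giving the desired convergence.

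For the first summand, let $K_X := \pi_X(\supp f)\subset X$, a compact set, and choose by Urysohn's lemma a cut-off $\psi\in C_c(X)$ with $0\le\psi\le 1$ and $\psi\equiv 1$ on $K_X$. For any $z\in Z$, the cross section $f_z$ is supported in $K_X$, so $|f_{v_i}-f_v|\le \|f_{v_i}-f_v\|_\infty\,\psi$ pointwise and hence
\[
\left|\int (f_{v_i}-f_v)\,d\lambda_{q(v_i)}\right| \;\le\; \|f_{v_i}-f_v\|_\infty \int \psi\,d\lambda_{q(v_i)}.
\]
The integral on the right converges to $\int \psi\,d\lambda_{q(v)}$ by the same $s$-system argument applied to $\psi$, and so is eventually bounded by some constant $M$. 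It therefore remains only to show $\|f_{v_i}-f_v\|_\infty = \sup_{x\in K_X}|f(x,v_i)-f(x,v)|\to 0$.

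The main obstacle is this final uniform-convergence step; everything else is formal. I would handle it by compactness: given $\varepsilon>0$, joint continuity of $f$ at each point $(x,v)$ with $x\in K_X$ produces open neighborhoods $U_x\ni x$ and $V_x\ni v$ on which $|f(x',z)-f(x,v)|<\varepsilon/2$; extracting a finite subcover $U_{x_1},\ldots,U_{x_n}$ of $K_X$ and letting $V := \bigcap_k V_{x_k}$ yields a neighborhood of $v$ on which $\|f_z-f_v\|_\infty <\varepsilon$ by the usual two-triangle estimate. (Equivalently, second countability of $X$ and $Z$ makes them metrizable, so one can invoke uniform continuity of $f$ on its compact support directly.) Combining, the first summand is eventually at most $M\varepsilon$ and the second is eventually at most $\varepsilon$, which gives the lemma.
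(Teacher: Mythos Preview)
Your proposal is correct and follows essentially the same route as the paper's proof: both use the identical triangle-inequality splitting, dispatch the second summand via the $s$-system continuity applied to the fixed cross section $f_v$, and control the first summand by $\|f_{v_i}-f_v\|_\infty$ times an eventually-bounded mass over the compact $X$-projection of $\supp f$. The only cosmetic difference is that you bound that mass with a Urysohn cut-off $\psi\in C_c(X)$ whereas the paper writes $\int_K 1\,d\lambda_{q(v_i)}$ directly; your version is if anything a touch more careful, since the continuity axiom for an $s$-system is phrased for $C_c$-functions.
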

\begin{proof}
    Let $\varepsilon>0$ be given, $v_i\to v$,  and $K=\text{\ensuremath{\pi_{1}(}supp}(f))$.
    By continuity of $q$ and $\lambda$, choose a neighborhood $V$ of $v$ such that
    if $u\in V$,
    \[
    \lambda_{q\left(u\right)}\left(s^{-1}\left(q\left(u\right)\right)\cap K\right)=\int_{K}1d\lambda_{q\left(u\right)}<\int_{K}1d\lambda_{q(v)}+1
    \]
    Set $M$ to be the right hand side of the expression above, and let $U$ be a neighborhood of $v$ such that for $u\in U$,
    \[
    \|f_u(a)-f_v(a)\|<\frac{\varepsilon}{M}
    \]
    Then for $v_{i}\in U\cap V$,
    \[
    \left|\int f_{v_{i}}\left(a\right)-f_v(a)\:d\lambda_{q\left(v_{i}\right)}(a)\right|<\frac{\varepsilon}{M}\int_{K}1\:d\lambda_{q\left(v_{i}\right)}<\varepsilon
    \]
    By continuity of $\left(\lambda_{b}\right)_{b\in B}$, we also have
    $\int f\left(a,v\right)\:d\lambda_{q\left(v_{i}\right)}(a)$ converging
    to $\int f\left(a,v\right)\:d\lambda_{q\left(v\right)}(a)$.
\end{proof}

The following theorem is the fundamental objective of this section. 
\begin{theorem}\label{Topologicalclassification}
Let $(F^{0},F^{1},s,r,\lambda)$ be a topological quiver and $G$
a locally compact group. For every principal $G$-bundle $q:P\rightarrow F^{0}$
with an isomorphism of pullbacks $\theta:s^{*}(P)\rightarrow r^{*}(P)$,
there exists a topological quiver $(E^{0},E^{1},\widetilde{s},\widetilde{r},\widetilde{\lambda})$
with $E^{0}=P$, $E^{1}=s^{*}(P)$ and a free and proper action of
G. Moreover,
every topological quiver $R = (E^{0},E^{1},\widetilde{s},\widetilde{r},\widetilde{\lambda})$
on which G acts freely and properly is isomorphic to one arising in this way.
\end{theorem}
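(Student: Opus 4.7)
For the forward direction, given the quiver $Q=(F^{0},F^{1},s,r,\lambda)$, the principal $G$-bundle $q:P\to F^{0}$, and the isomorphism $\theta:s^{*}(P)\to r^{*}(P)$, my plan is to set $E^{0}=P$ and $E^{1}=s^{*}(P)=\{(e,p)\in F^{1}\times P: s(e)=q(p)\}$, take $\widetilde{s}(e,p)=p$, and, writing $\theta(e,p)=(e,\tau(e,p))$ since $\theta$ covers the identity on $F^{1}$, set $\widetilde{r}(e,p)=\tau(e,p)$. The defining condition of $r^{*}(P)$ forces $q\circ\widetilde{r}(e,p)=r(e)$, which will be essential later. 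The $G$-action on $E^{1}$ is the diagonal one coming from the trivial action on $F^{1}$ and the given action on $P$, which preserves $s^{*}(P)$ since neither $s$ nor $q$ sees $G$ on that side. The $\widetilde{s}$-system is defined as the pushforward $\widetilde{\lambda}_{p}$ of $\lambda_{q(p)}$ under the embedding $s^{-1}(q(p))\hookrightarrow E^{1}$, $e\mapsto (e,p)$.

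Next I would verify the quiver axioms. Openness of $\widetilde{s}$ follows by inspecting a basic open $(V_{1}\times V_{2})\cap s^{*}(P)$: its image is $V_{2}\cap q^{-1}(s(V_{1}))$, which is open because $s$ is open. The support and fullness conditions for $\widetilde{\lambda}$ are immediate from the corresponding properties of $\lambda$; continuity of the system is exactly the hypothesis of Lemma \ref{convergenceofcrosssections} applied with $X=F^{1}$, $Y=F^{0}$, $Z=P$, and the given $s, q$. For the group action, freeness and properness on $E^{0}=P$ are inherited from the principal bundle; on $E^{1}$ they follow from regarding $s^{*}(P)$ as a closed $G$-invariant subspace of $F^{1}\times P$ with trivial action on the first factor, so properness descends via \cite[Proposition~1.3.1]{palais}. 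Invariance of $\widetilde{\lambda}$ under $G$ is a short calculation using $q(p\cdot g)=q(p)$ and unwinding the pushforward; equivariance of $\widetilde{r}$ is exactly the $G$-equivariance of $\theta$.

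For the converse, given a topological quiver $R=(E^{0},E^{1},\widetilde{s},\widetilde{r},\widetilde{\lambda})$ with a free proper $G$-action, I would let $R/G=(F^{0},F^{1},\dot{s},\dot{r},\dot{\lambda})$ be the quotient quiver and take $P=E^{0}$ as a principal $G$-bundle over $F^{0}$. Since $\widetilde{s}$ and $\widetilde{r}$ are $G$-equivariant and cover $\dot{s},\dot{r}:F^{1}\to F^{0}$, Theorem \ref{thmhus} produces $G$-bundle isomorphisms $E^{1}\cong\dot{s}^{*}(P)$ and $E^{1}\cong\dot{r}^{*}(P)$ over $F^{1}$; composing one with the inverse of the other yields the required $\theta$. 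The main obstacle will be the verification that the forward construction applied to the data $(R/G,P,\theta)$ recovers $R$ on the nose (up to quiver isomorphism, including the measure data). The topological part amounts to chasing the pullback identifications given by Theorem \ref{thmhus}; the measure-theoretic part requires showing that the pushforward of $\dot{\lambda}_{q(p)}$ along $e\mapsto(e,p)$ corresponds, under the identification $E^{1}\cong\dot{s}^{*}(P)$, to the original $\widetilde{\lambda}_{p}$, which should follow by testing against $\xi\in C_{c}(E^{1})$ and unwinding the definition of $\dot{\lambda}$ as the quotient measure.
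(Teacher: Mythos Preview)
Your proposal is correct and follows essentially the same route as the paper: the same choices of $E^{0}=P$, $E^{1}=s^{*}(P)$, $\widetilde{s}=\pi_{2}$, $\widetilde{r}=\pi_{2}\circ\theta$, and the same lifted $\widetilde{s}$-system, with continuity handled by Lemma~\ref{convergenceofcrosssections} and the converse extracted from the quotient quiver via Theorem~\ref{thmhus}. If anything, your treatment is more careful in two spots---your openness argument for $\widetilde{s}$ via the explicit image $V_{2}\cap q^{-1}(s(V_{1}))$ actually justifies what the paper asserts by appeal to restricting the open projection, and you spell out (invoking \cite{palais} and the measure comparison) steps that the paper leaves largely implicit in the converse.
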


\begin{proof}
Let $P$ and $q$ be given such that $q:P\rightarrow F^{0}$
is a principal $G$-bundle, and define $E^{1}:=s^{*}(P)$, $\widetilde{s}:=\pi_{2}$
as in the statement of the theorem. $\widetilde{s}$ is an open map since $E^{1}$ inherits the relative
topology from $F^{1}\times E^{0}$ and $\pi_{2}:F^{1}\times E^{0}\rightarrow E^{0}$
is open. We define $\widetilde{r}:=\pi_{2}\circ\theta$, and summarize our notation in the following commutative diagram

\[\begin{tikzcd}[column sep=scriptsize] 	&& {r^*(P)} &&&& {E^1=s^*(P)} \\ 	\\ 	{E^0=P} &&&&&&&& {E^0 =P} \\ 	\\ 	{F^0} &&&& {F^1} &&&& {F^0.} 	\arrow["s"', from=5-5, to=5-9] 	\arrow["r", from=5-5, to=5-1] 	\arrow["q", from=3-9, to=5-9] 	\arrow["q"', from=3-1, to=5-1] 	\arrow["{\widetilde{s}=\pi_2}", from=1-7, to=3-9] 	\arrow["{\pi_2}"', from=1-3, to=3-1] 	\arrow["{\theta }"', from=1-7, to=1-3] 	\arrow["{\pi_1}", from=1-7, to=5-5] 	\arrow["{\pi_1}"', from=1-3, to=5-5] 	\arrow["{\widetilde{r}}", from=1-7, to=3-1] 	\arrow["\cong", from=1-7, to=1-3] \end{tikzcd}\]

We have a topological quiver $Q=(E^{0},E^{1},\widetilde{s},\widetilde{r},\widetilde{\lambda})$
if we can construct an $\widetilde{s}$-system of measures $\widetilde{\lambda}$.
Fix $v\in E^0$ and $\phi\in C_c(E^1)$ and then denote the cross section of $\phi$ along $v$ by $\phi_v\in C_c(F^1)$, so that $\phi_v(e) = \phi(e, v).$
We then define a family of Radon Measures $\left(\widetilde{\lambda}_{v}\right)_{v\in E^{0}}$ by 
\[
\int\phi(e,w)\:d\widetilde{\lambda}_{v}(e,w)=\int\phi_v(e)\:d\lambda_{q(v)}(e),
\]
where $(e, w)\in E^1\subseteq F^1\times E^0$. 

Since $(d,u)\in\widetilde{s}^{-1}(v)$ implies $u=v$, and since $\phi(d, u)>0$ implies $\phi_u(d)$ has nonempty support in $s^{-1}\left(q(u)\right)$, we have $\text{supp}\left(\widetilde{\lambda}_{v}\right)$ is the full
preimage $\widetilde{s}^{-1}(v)$.
To see that the proposed family
of measures assembles continuously, let $\phi\in C_{c}\left(E^{1}\right)$,
and $v_{i}$ a net in $E^{0}=P$ which converges to $v$. Then by
the prior lemma \ref{convergenceofcrosssections},

\begin{align*}
\int\phi(e,w)\:d\widetilde{\lambda}_{v}(e,w)&-\int\phi(e',w')\:d\widetilde{\lambda}_{v_{i}}(e',w')\\
=\int\phi_v(e)\:d\lambda_{q(v)}(e)&-\int\phi_{v_i}\left(e'\right)\:d\lambda_{q\left(v_{i}\right)}(e')
\end{align*}
converges to $0$.

Moreover, the quiver $Q$ has an action by $G$, where $G$ acts on an edge $(e,v)$
by $(e,v)\cdot g=(e,v\cdot g)$, and

\[\int\phi(e,w\cdot g)\:d\widetilde{\lambda}_{vg}(e,w)= \int \phi_{vg}(e)\,d\lambda_{q(vg)}(e)=\int \phi_v(e)\,d\lambda_{q(v)}(e)=\int \phi((e,w))\,d\widetilde{\lambda}_{v}(e,w).
\]
Therefore $\left(\widetilde{\lambda}_{v}\right)_{v\in E^{0}}$ is
a $G$-invariant $\widetilde{s}$-system of measures.

For the other direction, suppose $E=(E^{0},E^{1},\widetilde{s},\widetilde{r},\widetilde{\lambda})$
is a topological quiver with a free and proper action by $G$ such
that $\widetilde{\lambda}$ is $G$-invariant. Let $p_{i}:E^{i}\rightarrow F^{i}=E^{i}/G$
for $i\in\{0,1\}$ be the quotient maps. Equivariance of $\widetilde{s}$
and $\widetilde{r}$ induces natural maps $s:F^{1}\rightarrow F^{0}$
and $r:F^{1}\rightarrow F^{0}$. Further define a family of measures
$(\lambda_{w})_{w\in F^{0}}$ where for Borel $U\subseteq F^1$,
\[
\lambda_{p_{0}(v)}(U)=\widetilde{\lambda}_{v}\circ p_{1}^{-1}\left(U\cap s^{-1}\left(p_{0}(v)\right)\right)
\]

We must verify that this family is an $s$-system of measures. Recall that for any positive Radon measure $\mu$ on a locally compact Hausdorff space $X$,  $x\in \text{supp}(\mu)$ if and only if $\int \phi \,d\mu>0$ for every $\phi\in C_c(X, [0,1])$ such that $\phi(x)>0.$ This together with the isomorphism $E^1\cong s^*(E^0)$ coming from Theorem~\ref{thmhus} is enough to confirm that $p_0(v)\in \text{supp}(\lambda_{p_o(v)})$ if and only if $v\in \text{supp}(\widehat{\lambda}_v)$, from which it follows that each of the measures in $\lambda$ have full support. Regarding continuity, we first fix $\phi\in C_c(F^0)$ and consider $\psi = \phi\circ p_i\in C_b(E^1)$. Notice that $\psi$ is invariant for the obvious extended $G$-action, so that 
\[\int \phi(e)\,d\lambda_v = \int \psi(e')\,d\widetilde{\lambda}_w(e') = \int \psi(e')\,d\widetilde{\lambda}_{g\cdot w}(e')\] 
for any $g\in G$, where $w$ is any representative in the total space of $p_0(w)$. It follows that 
\[\int \phi(e)\,d\lambda_v-\int\phi(e)\,d\lambda{v_i}(e) = \int\psi(e')\,d\widetilde{\lambda}_{w}(e')-\int \psi(e')\,d\widetilde{\lambda}_{w_i}(e')\]
where now $w, w_i$ are any representatives such that $p_0(w)=v, p_0(w_i)=v_i.$ By fixing $w$ and adjusting the $w_i$ in their orbits, we may arrange for the $w_i$ to converge to $w$ and then choose $\eta\in C_c(E^1)$ with $\eta|_{s^{-1}(w)} = \psi|_{s^{-1}(w)}$. By continuous assembly of $\widetilde{\lambda}$ and the preceding four equations, it follows that $\lambda$ is a continuous $s$-system of measures.  
\end{proof}

\section{Correspondences}\label{correspondences}

Given an action on a topological quiver, we want to see how this induces an action on the associated quiver correspondence.

\begin{proposition}
Let $Q=(E^0, E^1, r, s, \lambda)$ be a topological quiver, and suppose that it it admits a continous action by $G.$ Then the associated correspondence $(X, A)$ admits a group action as well. 
\end{proposition}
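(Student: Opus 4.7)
The plan is to construct the action $(\gamma, \alpha)$ directly from the $G$-actions on $E^1$ and $E^0$, verify that at each $g$ the pair is an invertible correspondence homomorphism, and then verify continuity.

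First I would define $\alpha: G \to \operatorname{Aut}(A)$ by the usual pullback $\alpha_g(f)(v) = f(v \cdot g)$ for $f \in C_0(E^0) = A$; continuity of the $G$-action on $E^0$ gives that this is a strongly continuous action of $G$ on $A$. Next I would define $\gamma$ on the dense subspace $X_0 = C_c(E^1)$ by $\gamma_g(\xi)(e) = \xi(e \cdot g)$. Because $G$ acts on $E^1$ by homeomorphisms, $\gamma_g$ carries $C_c(E^1)$ to itself bijectively, with inverse $\gamma_{g^{-1}}$.

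The main content is checking that $(\gamma_g, \alpha_g)$ is a correspondence homomorphism on $(X_0, A)$ and extends to $X$. The module identities are immediate from $G$-equivariance of $r$ and $s$: for $\xi \in X_0$ and $f \in A$,
\[
\gamma_g(\xi f)(e) = \xi(e\cdot g)f(s(e\cdot g)) = \xi(e\cdot g)f(s(e)\cdot g) = (\gamma_g(\xi)\alpha_g(f))(e),
\]
and similarly on the left using $r$. The key computation is preservation of the $A$-valued inner product, which is exactly where the $G$-invariance condition on the $s$-system enters: for $\eta, \xi \in X_0$ and $v \in E^0$,
\[
\langle \gamma_g(\eta), \gamma_g(\xi)\rangle(v) = \int \overline{\eta(e\cdot g)}\xi(e\cdot g)\,d\lambda_v(e) = \int \overline{\eta(e)}\xi(e)\,d\lambda_{v\cdot g}(e) = \alpha_g(\langle \eta, \xi\rangle)(v),
\]
applying the definition of a principal action to the function $e \mapsto \overline{\eta(e)}\xi(e) \in C_c(E^1)$. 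In particular $\|\gamma_g(\xi)\| = \|\xi\|$, so $\gamma_g$ extends uniquely to an isometric $\C$-linear bijection of $X$ intertwining the bimodule and inner-product structures through $\alpha_g$; its inverse is the extension of $\gamma_{g^{-1}}$, so $(\gamma_g, \alpha_g)$ is an invertible correspondence homomorphism.

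The step I expect to be the most delicate is strong continuity of $\gamma$, namely that $g \mapsto \gamma_g(\xi)$ is norm-continuous for each $\xi \in X$. By a standard $\varepsilon/3$ argument and density of $X_0$, it suffices to treat $\xi \in C_c(E^1)$. Fix a compact neighborhood $W$ of the identity in $G$ and let $K = \operatorname{supp}(\xi) \cdot W$, which is compact by properness of scalar multiplication on the product $E^1 \times G$. On the compact set $K$, uniform continuity of $\xi$ along the $G$-orbit direction gives for any $\varepsilon > 0$ a neighborhood $V \subset W$ of the identity such that $\|\gamma_g(\xi) - \xi\|_\infty < \varepsilon$ whenever $g \in V$, with both $\gamma_g(\xi)$ and $\xi$ supported in $K \cup \operatorname{supp}(\xi)$. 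Using the inner product formula and Lemma~\ref{convergenceofcrosssections} applied to the compactly supported function $(e,v) \mapsto |\gamma_g(\xi)(e) - \xi(e)|^2$, this sup-norm control transfers to control of $\|\langle \gamma_g(\xi) - \xi, \gamma_g(\xi) - \xi\rangle\|_A$ and hence of the Hilbert module norm, giving $\|\gamma_g(\xi) - \xi\|_X \to 0$ as $g \to 1_G$; translation then yields continuity at an arbitrary $g_0 \in G$. The group law $\gamma_{gh} = \gamma_g \circ \gamma_h$ is inherited from $X_0$ and extends to $X$ by continuity, completing the verification that $(\gamma, \alpha)$ is an action of $G$ on the correspondence $(X, A)$.
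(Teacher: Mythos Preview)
Your proposal follows the same outline as the paper: define $\alpha$ and $\gamma$ by pullback along the $G$-actions, verify the bimodule and inner-product compatibilities using equivariance of $r,s$ and $G$-invariance of the $s$-system, then establish strong continuity of $\gamma$ on $C_c(E^1)$ and extend by density. The algebraic verifications are carried out correctly and match the paper's computations.

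There is one genuine slip in the continuity step. After obtaining $\|\gamma_g(\xi)-\xi\|_\infty<\varepsilon$ with common compact support $K$, you need to pass to the Hilbert-module norm, i.e.\ to control $\sup_{v\in E^0}\int_K|\gamma_g(\xi)(e)-\xi(e)|^2\,d\lambda_v(e)$. You invoke Lemma~\ref{convergenceofcrosssections}, but that lemma only yields convergence of $\int f_{v_i}\,d\lambda_{q(v_i)}$ along a \emph{net} $v_i\to v$; it says nothing about uniformity of the bound over all $v\in E^0$, which is what the supremum requires. (Also, the function you propose to apply it to, $(e,v)\mapsto|\gamma_g(\xi)(e)-\xi(e)|^2$, does not depend on $v$.) The correct tool is Lemma~\ref{convergence}: uniform convergence inside a fixed compact $K$ implies convergence in $\|\cdot\|_X$, because $\sup_v\lambda_v(K)$ is finite---one bounds it by $\sup_v\int h\,d\lambda_v$ for any $h\in C_c(E^1)$ with $h|_K=1$, and this is a continuous compactly supported function of $v$. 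This is exactly how the paper closes the argument (via the auxiliary bump function $m$). A minor aside: compactness of $K=\operatorname{supp}(\xi)\cdot W$ follows simply from continuity of the action map on the compact set $\operatorname{supp}(\xi)\times W$, not from properness.
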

\begin{proof}
It is well known how an action of a group on a space induces an action on the associated function space. Let $A=C_0(E^0)$ and given $G\curvearrowright E^0,$ we consider $\alpha_t(f)(v) = f(v\cdot t),$ where $f\in A$. Likewise, we consider for $\xi\in C_c(E^1), \gamma_t(\xi)(e) = \xi(e\cdot t)$, and claim that this extends to an action on the correspondence $(X, A)$. First we check continuity of $\gamma.$

Let $(t_i)_{i\in I}$ a net in $G$ converging to $t$, and let $\xi\in C_c(E^1)$. We claim that $\gamma_{t_i}(\xi)$ converges uniformly to $\gamma_t(\xi)$. To this end, take compact neighborhoods $K, K'\subset G$, with $t\in K\subset K'$, and define an auxiliary function $\mu:G\to [0,1]$ such that $\mu|_K = 1,$ and $\supp(\mu)\subseteq K'.$ Let $L=\supp(\xi)$ a compact subset of $E^1$, and put $C=L\cdot K'$, which is a compact subset of $E^1$ since the action is continuous. We note that $L\cdot t\subset C$. So we define another auxiliary function $m\in C_c(E^1)$ with $m|_C=1$. For use later, notice that $\|m\|_X\geq \lambda_v(C)$ for every $v\in E^0.$ Now, define $\eta:E^1\times G\to \C$ by the equation $\eta(e, s) = m(e)\xi(e\cdot s) \mu(s)$. Then $\eta$ is compactly supported, and there is an $i_0\in I$ so that for $i>i_0, t_i\in K,$ whence $\eta(e, t_i) = \xi (e\cdot t_i)$. Applying \cite{hskew}*{Lemma 3.1}, we assure that $\gamma_{t_i}(\xi)$ converges uniformly to $\gamma_t(\xi).$ 

So let $0<\epsilon<1$, and choose $i_0$ so that for $i>i_0, \|\gamma_t(\xi)-\gamma_{t_i}(\xi)\|_u<\frac{\epsilon}{\|m\|}.$

Then
\begin{align*}
\|\gamma_t(\xi)-\gamma_{t_i}(\xi)\|_X^2 &= \sup_{v\in E^0} \left \{\int |\xi(e\cdot t) - \xi(e\cdot t_{i})|^2\,d\lambda_v(e) \right \} < \frac{\epsilon^2}{\|m\|} \sup_{v\in E^0} \left \{\int \,d\lambda_v \right \} <\epsilon^2 \\
\end{align*}

Taking square roots, we see that $\gamma$ is continuous for compactly supported functions $\xi\in C_c(E^1)$. By density of this subspace in $X$, we have continuity of $\gamma$ everywhere. 

It is now easy to verify that the actions $\alpha$ and $\gamma$ are compatible with the correspondence structures. Indeed, for $\xi, \eta\in C_c(E^1)$ and $f\in C_0(E^0)$ 
\begin{align*}
\alpha_t(\langle \eta, \xi\rangle)(v) = \int \overline{\eta(e)}\xi(e) \,d\lambda_{v\cdot t} (e) = \int \overline{\eta(e\cdot t)} \xi(e\cdot t)\,d\lambda_v(e)
= \langle \gamma_t(\eta), \gamma_t(\xi) \rangle(v)\\
\gamma_t(\xi\cdot f)(e) = (\xi\cdot f)(e\cdot t) = \xi(e\cdot t)f(s(e\cdot t)) = \xi(e\cdot t)f(s(e)\cdot t) = (\gamma_t(\xi)\cdot \alpha_t(f))(e)\\
\gamma_t(f\cdot \xi)(e) = (f\cdot \xi)(e\cdot t) = f(r(e\cdot t))\xi(e\cdot t) = f(r(e)\cdot t)\xi(e\cdot t) = (\alpha_t(f)\cdot \gamma_t(\xi))(e),
\end{align*}
Where the equalities follow from the equivariance of the source and range maps.

\end{proof}

\begin{theorem}\label{ME}
If a locally compact group $G$ acts freely and properly on a topological quiver $Q,$ then $\cs(Q)\rtimes_r G$ and $\cs(Q/G)$ are strongly Morita equivalent. 
\end{theorem}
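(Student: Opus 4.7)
The plan is to follow the strategy of Deaconu--Kumjian--Quigg \cite{dkq}: establish a $*$-isomorphism $C^*(Q/G)\cong C^*(Q)^\alpha$ with Rieffel's generalized fixed-point algebra, and then invoke the Morita equivalence between $C^*(Q)^\alpha$ and $C^*(Q)\rtimes_r G$ which holds for proper actions in Rieffel's sense. The novelty, as emphasized in the introduction, is that rather than building the isomorphism by hand, I would invoke the functoriality machinery of \cite{kqrfunctor}: it suffices to construct a Cuntz--Pimsner covariant correspondence homomorphism (in the sense of Definition \ref{CPcovcor}) from $(X_{Q/G}, C_0(E^0/G))$ into the multiplier correspondence over $C^*(Q)^\alpha$, and then verify that the induced $*$-homomorphism between Cuntz--Pimsner algebras is an isomorphism onto the fixed-point algebra.

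First, I would construct the correspondence homomorphism. The pullback along the quotient maps $q^0, q^1$ gives nondegenerate embeddings $C_0(E^0/G)\hookrightarrow M(C_0(E^0))$ and $X_{Q/G}\hookrightarrow M(X)$; the defining relation $\int f\,d\dot{\lambda}_{q^0(v)}=\int(f\circ q^1)\,d\lambda_v$ for the quotient measure system guarantees that the embedded data preserves the $A$-valued inner product. Invariance of the $s$-system under $G$ (baked into our definition of a quiver action) ensures the images land in the $\gamma$- and $\alpha$-fixed multipliers, hence inside the multiplier correspondence of the fixed-point sub-correspondence $(X^\gamma, C_0(E^0)^\alpha)$.

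Second, I would verify the four conditions of Definition \ref{CPcovcor}. Nondegeneracy of the coefficient homomorphism follows from properness of the $G$-action. The compatibility with compact operators and the inclusion of the Katsura ideal $J_{X_{Q/G}}$ into the appropriate multiplier algebra of relative ideals is more subtle, and I expect Lemma \ref{convergence} (flagged in the preamble) to be essential here for controlling averages against continuous families of measures. The commutativity of the diagram should reduce to a compatibility identity relating integration of pullbacks against the systems $\lambda$ and $\dot{\lambda}$.

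Finally, Cuntz--Pimsner covariance yields a $*$-homomorphism $\phi\colon C^*(Q/G)\to M(C^*(Q)^\alpha)$, and the main obstacle is to show that $\phi$ is an isomorphism onto $C^*(Q)^\alpha$. For injectivity, I would appeal to a gauge-invariant uniqueness theorem for topological quiver algebras together with faithfulness of the coefficient embedding. For surjectivity, I would show that averaged generators of $C^*(Q)^\alpha$, obtained by Haar integration of compactly supported generators of $C^*(Q)$, lie in the image of $\phi$ -- this is essentially a translation between $G$-averaged data on $Q$ and fibrewise-integrated data on $Q/G$, again governed by the continuity properties recorded in Lemma \ref{convergence}. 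With $C^*(Q/G)\cong C^*(Q)^\alpha$ in hand, Rieffel's theorem completes the proof.
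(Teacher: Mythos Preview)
Your overall strategy coincides with the paper's: build a Cuntz--Pimsner covariant correspondence homomorphism from $(X_{Q/G},C_0(E^0/G))$ via pullback along the quotient maps, invoke \cite{kqrfunctor} to obtain a $*$-homomorphism, identify its image with Rieffel's generalized fixed-point algebra $C^*(Q)^\alpha$, and finish with Rieffel's Morita equivalence. A few points of divergence are worth noting.

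First, the target of your correspondence homomorphism is not quite right. The paper maps into $(M(Y),M(B))$ with $(Y,B)$ the correspondence of $Q$ itself---not into a correspondence over $C^*(Q)^\alpha$ or into a fixed-point sub-correspondence $(X^\gamma,B^\alpha)$---and then obtains $\Pi:C^*(Q/G)\to M(C^*(Q))$ from \cite{kqrfunctor}*{Corollary 3.6}. The identification of $\operatorname{Im}\Pi$ with $C^*(Q)^\alpha$ is a separate step, carried out via the Kaliszewski--Quigg--Raeburn characterization \cite{kqr_proper}*{Proposition 3.1} rather than by landing there directly. Second, you omit the verification that the induced action on $C^*(Q)$ is saturated and proper in Rieffel's sense; the paper dispatches this quickly via \cite{hrw_symm}*{Lemma 4.1}, using the equivariant inclusion $C_0(E^0)\hookrightarrow M(C^*(Q))$. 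Third, the role you assign to Lemma~\ref{convergence} is off: in the paper it is not used for Cuntz--Pimsner covariance, but rather (together with Lemma~\ref{keepingcompact}) in the image-identification step, adapting \cite{dkq}*{Lemmas 5.7--5.9} to the quiver setting. The genuinely delicate part of CP covariance is condition~(iii), which requires showing that $(q^0)^{-1}$ carries regular vertices of $Q/G$ to regular vertices of $Q$; this is a purely topological argument about precompact neighborhoods and local injectivity of the source map, and your outline underestimates it. Finally, the paper's injectivity argument is simpler than a gauge-invariant uniqueness appeal: it follows directly from injectivity of $\nu$ via \cite{kqrfunctor}.
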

\begin{remark}
We will proceed with the following strategy, modeled after \cite{dkq}*{Section 5}. First, we will show that there is an action $\alpha$ of $G$ on $\cs(Q)$ which is saturated and proper in the sense of Rieffel \cite{r_proper}. Then we will construct an isomorphism between Rieffel's generalized fixed-point algebra $\cs(E)^\alpha$and the quotient quiver algebra $\cs(Q/G),$ and appeal to Rieffel's imprimitivity result. The strategy is summarized mathematically by the string of equivalences  
\[\cs(Q/G)\cong \cs(Q)^\alpha \cong_{\text{ME}}\cs(Q)\rtimes_r G,\]
where the second (Strong Morita) equivalence comes from \cite{r_proper}. Rieffel's generalized fixed-point algebra is essentially designed so that the Morita equivalence holds, but as in \cite{dkq}, we will find it easier to identify the fixed point algebra using the characterization provided by \cite{kqr_proper}*{Proposition 3.1}. The details of this identification are well summarized in \cite{dkq}*{Definition 5.5 } and the successive discussion, and with more detail in \cite{kqr_proper}. Throughout this article, we will specify to our setting, but the ideas discussed here work in complete generality.

The bulk of the work is in constructing the isomorphism mentioned above. To do this, we appeal to \cite{kqrfunctor} by constructing a correspondence homomorphism $\psi_\pi:(X(Q/G), A(E/G))\to (X(Q), A(Q)),$ proving that it is Cuntz-Pimsner covariant, appealing to \cite{kqrfunctor}*{Corollary 3.6}, and then identifying Rieffel's fixed point algebra as the image of the associated map. First, we offer a few preparatory techinical lemmas for topological quivers which \cite{dkq} consider for the graph setting.  
\end{remark}

\begin{lemma}\label{convergence}
Let $\Q = \{F^0, F^1, r, s, \lambda\}$ be a topological quiver, and $(X, A)$ be the associated correspondence. If $K\subset F^1$ is compact and  $f_i$ is a net in $C_K(F^1)$ converging uniformly to $f\in C_K(F^1),$ then $f_i$ coverges to $f$ in the associated correspondence norm $\|\cdot\|_X$.
\end{lemma}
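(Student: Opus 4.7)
The plan is to control the correspondence norm $\|f_i - f\|_X$ directly by the uniform norm $\|f_i - f\|_\infty$ using an auxiliary bound on the total mass that $\lambda_v$ assigns to the compact set $K$, uniformly in $v$. Recall that by definition
\[
\|\xi\|_X^2 = \|\langle \xi, \xi\rangle\|_A = \sup_{v\in F^0} \int_{F^1} |\xi(e)|^2 \, d\lambda_v(e).
\]
Since $f_i - f$ is supported in $K$, for each $v\in F^0$ we have the pointwise bound
\[
\int_{F^1} |f_i(e) - f(e)|^2 \, d\lambda_v(e) \leq \|f_i - f\|_\infty^2 \cdot \lambda_v(K),
\]
so the only task is to show that $M := \sup_{v\in F^0} \lambda_v(K)$ is finite; the conclusion then follows by taking $\sup_v$ on both sides and invoking the hypothesis $\|f_i - f\|_\infty \to 0$.

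To see that $M<\infty$, I will use Urysohn's lemma to produce $\xi\in C_c(F^1)$ with $0\leq \xi \leq 1$ and $\xi|_K \equiv 1$, so that $\lambda_v(K) \leq \int \xi \, d\lambda_v$ for every $v$. By the defining property of an $s$-system recalled in Section~\ref{prelim}, the assignment
\[
v\longmapsto \int_{F^1} \xi(e)\, d\lambda_v(e)
\]
is continuous and supported on a closed subset of $s(\mathrm{supp}(\xi))$. Because $s$ is continuous and $\mathrm{supp}(\xi)$ is compact, $s(\mathrm{supp}(\xi))$ is compact, so this function belongs to $C_c(F^0)$ and in particular is bounded. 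Hence $M \leq \|\int \xi\, d\lambda_\cdot \|_\infty < \infty$.

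The main potential obstacle is ensuring the uniform bound $M<\infty$ across the entire (possibly non-compact) vertex space $F^0$; once this is secured the argument is essentially a single estimate. The point is that the properness condition built into the $s$-system (continuity of $v \mapsto \int \xi \, d\lambda_v$ together with compact support in $s(\mathrm{supp}(\xi))$) is precisely what converts uniform convergence on $F^1$ into convergence in the Hilbert $A$-module norm, and is the same mechanism underlying the more elaborate net convergence in Lemma~\ref{convergenceofcrosssections}.
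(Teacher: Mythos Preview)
Your proof is correct and follows essentially the same approach as the paper's: both arguments pick a Urysohn bump function $\xi$ (the paper calls it $h$) equal to $1$ on $K$, use the continuity of the $s$-system to conclude that $v\mapsto \int \xi\,d\lambda_v$ lies in $C_c(F^0)$ and is therefore bounded, and then dominate $\|f_i-f\|_X^2$ by $\|f_i-f\|_\infty^2$ times this bound. The only cosmetic difference is that you pass through the intermediate quantity $\lambda_v(K)$ explicitly, whereas the paper inserts $h$ directly under the integral sign.
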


\begin{proof}
Take $K'\subseteq F^1$ with $K\subset K',$ $K'$ compact, and let $h\in C_{c}(F^1)$ with $h|_K = 1, \|h\|_u<1, h\geq 0,$ and $\supp(h)\subseteq K'.$ Then Define $H:F^0\to \C$ by $H(v) = \int h(e)\,d\lambda_v(e)$, which is a continuous function with support confined to the compact set $s(K')$. Now, we compute 
\begin{align*}
\|f-f_i\|^2_X &= \langle f-f_i, f-f_i\rangle_X  = \sup_{v\in F^0} \int |f(e)-f_i(e)|^2\,d\lambda_v(e)\\
&\leq \sup_{v\in F^0} \int \|f-f_i\|^2_u h(e)\, d\lambda_v(e)\leq \|f-f_i\|^2_u\cdot \|H\|_u
\end{align*}
Since $H$ is a continuous compactly supported function, there is an $M$ such that $M\geq \|H\|_u$, showing that uniform convergence of the $f_i$ implies convergence in the correspondence norm. 
\end{proof}

In \cite{dkq}*{Corollary 3.9} the authors offer a result with two parts. While the second statement fails for topological quivers, the first one is retained for exactly the same reasons. For convenience and clarity, we restate the result relevant to us here. 

\begin{lemma}{\cite{dkq}*{Corollary 3.9(a)}}\label{keepingcompact}\\
Let $\Q = (F^0, F^1, r, s, \lambda)$ be a topological quiver admitting a principal action by locally compact group $G,$ and let $\R = (E^0, E^1, \dot{r}, \dot{s}, \lambda)$ be the associated quotient. Let $K\subset E^1$ be compact. 
Then for every compact $L\subset F^0$ the sets $r\inv(L)\cap q\inv(K)$ and $s\inv(L)\cap q\inv(K)$ are compact in $F^1.$
\end{lemma}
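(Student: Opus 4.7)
The plan is to realize $r^{-1}(L)\cap q^{-1}(K)$ as a closed subset of a pre-identified compact set in $F^1$, and then invoke the fact that closed subsets of compact Hausdorff spaces are compact. The argument for $s^{-1}(L)\cap q^{-1}(K)$ is identical, with $s$-equivariance replacing $r$-equivariance, so I would treat only the range case.

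The first step is to produce a compact lift $\tilde{K}\subseteq F^1$ with $q(\tilde{K})=K$. Since the $G$-action on $F^1$ is free and proper, $q\colon F^1\to E^1$ admits continuous local sections by Palais's theorem. Covering the compact $K$ by finitely many trivializing open sets, cutting back to a compact refinement inside each chart, and lifting the resulting pieces through the local sections assembles $\tilde{K}$ as a finite union of compact sets that collectively project onto $K$.

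The second step exploits the properness of the $G$-action on $F^0$: the map $\Phi\colon F^0\times G\to F^0\times F^0$ defined by $(x,g)\mapsto (x,x\cdot g)$ is proper, so $\Phi^{-1}(r(\tilde{K})\times L)$ is compact in $F^0\times G$, and its image $C_G$ under the projection to $G$ is compact. The containment $r^{-1}(L)\cap q^{-1}(K)\subseteq \tilde{K}\cdot C_G$ then follows quickly: for any $e$ in the intersection, pick $\tilde{k}\in\tilde{K}$ with $q(\tilde{k})=q(e)$; freeness gives a unique $g\in G$ with $e=\tilde{k}\cdot g$, and $G$-equivariance of $r$ together with $r(e)\in L$ puts $(r(\tilde{k}),g)$ into $\Phi^{-1}(r(\tilde{K})\times L)$, so $g\in C_G$. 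Since $\tilde{K}\cdot C_G$ is the continuous image of the compact set $\tilde{K}\times C_G$ under the action, it is compact, and $r^{-1}(L)\cap q^{-1}(K)$ is closed in $F^1$ (being the preimage of closed sets under continuous maps into Hausdorff spaces), so it is a closed subset of a compact set.

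The main obstacle I expect is producing the compact lift $\tilde{K}$ in the first step. Local sections of $q$ exist by principality of the action, but some care is required to arrange a finite compact refinement whose pieces still collectively project onto $K$. Once $\tilde{K}$ is secured, the remainder is a formal consequence of properness, freeness, and equivariance.
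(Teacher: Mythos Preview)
Your argument is correct, but it takes a more hands-on route than the paper. The paper simply invokes the pullback isomorphism of Theorem~\ref{thmhus}: since $r:F^1\to F^0$ is $G$-equivariant and covers $\dot r:E^1\to E^0$, the map $e'\mapsto (q(e'),r(e'))$ is a homeomorphism $F^1\cong \dot r^*(F^0)\subseteq E^1\times F^0$. Under this identification, $r^{-1}(L)\cap q^{-1}(K)$ corresponds precisely to $(K\times L)\cap \dot r^{*}(F^0)$, a closed subset of the compact product $K\times L$, and compactness is immediate. Your approach instead lifts $K$ to a compact $\tilde K\subseteq F^1$, extracts a compact set $C_G$ of group elements from properness of the action on $F^0$, and traps the intersection inside $\tilde K\cdot C_G$; this is self-contained and does not presuppose the pullback structure, at the cost of more bookkeeping. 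One small remark on your first step: you do not actually need local sections (whose existence for arbitrary locally compact $G$ is a subtler matter than your appeal to Palais suggests) to produce $\tilde K$. Openness of the quotient map $q$ together with local compactness of $F^1$ already suffices: cover $K$ by finitely many sets $q(\mathrm{int}\,V_i)$ with each $V_i\subseteq F^1$ compact, and take $\tilde K=\bigcup_i \bigl(V_i\cap q^{-1}(K)\bigr)$.
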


\begin{proof}
As in the topological graph setting, this is a direct consequence of \cite{dkq}*{Lemma 3.8}, which ultimately comes from the isomorphism of Theorem \ref{thmhus}.  
\end{proof}

\begin{proposition}\label{homomorphism}
There is an injective $*$-homomorphism $\Pi:\cs(Q/G)\to M(\cs(Q))$. 
\end{proposition}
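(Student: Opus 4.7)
The approach is to mirror the topological graph argument of \cite{dkq} using the functoriality machinery of~\cite{kqrfunctor}. The plan is to construct a nondegenerate Cuntz--Pimsner covariant correspondence homomorphism $(\psi,\pi):(X(Q/G), A(Q/G))\to (M(X(Q)), M(A(Q)))$, apply~\cite{kqrfunctor}*{Corollary 3.6} to obtain an induced $*$-homomorphism $\Pi:\cs(Q/G)\to M(\cs(Q))$, and then establish injectivity via Katsura's gauge-invariant uniqueness theorem.

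Write $Q=(E^0, E^1, r, s, \lambda)$ and $Q/G=(E^0/G, E^1/G, \dot r, \dot s, \dot\lambda)$, and let $q^i: E^i\to E^i/G$ denote the orbit maps. I will first define $\pi:C_0(E^0/G)\to C_b(E^0)\subseteq M(C_0(E^0))$ by $\pi(f)=f\circ q^0$; because $q^0$ is a continuous open surjection, $\pi$ is an injective nondegenerate $*$-homomorphism. For $\xi\in C_c(E^1/G)$ I will set $\psi(\xi)(e)=\xi(q^1(e))$. Lemma~\ref{keepingcompact} (with appropriate relabeling of vertex and edge spaces) guarantees that for every $f\in C_c(E^0)$ the function $e\mapsto \psi(\xi)(e) f(s(e))$ has compact support in $E^1$, so $\psi(\xi)$ determines an element of $M(X(Q))$; Lemma~\ref{convergence} then extends the definition to all of $X(Q/G)$. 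Compatibility of $(\psi,\pi)$ with the bimodule structure is immediate from the $G$-equivariance of $r$ and $s$, and compatibility with the inner product is precisely the defining identity for $\dot\lambda$ in terms of $\lambda$.

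Cuntz--Pimsner covariance in the sense of Definition~\ref{CPcovcor} comprises four conditions. Condition (ii) is the nondegeneracy of $\pi$ shown above. Condition (i), together with condition (iv) evaluated on the generating rank-one operators $\theta_{\eta,\xi}$, follows routinely from Lemma~\ref{keepingcompact} and the identity $\psi^{(1)}(\theta_{\eta,\xi})=\theta_{\psi(\eta),\psi(\xi)}$ combined with a cutoff argument. The main obstacle is condition (iii): that $\pi$ sends the Katsura ideal $J_{X(Q/G)}$ into $M(A(Q);J_{X(Q)})$. This amounts to comparing the condition $\phi(f)\in\bK(X(Q/G))$ with $\phi(\pi(f))\in\bK(X(Q))$ for $f$ supported on orbits of interest, and it is here that the principality of the $G$-action together with the Husemoller-type identification of Theorem~\ref{thmhus} is essential to transport local fiberwise compactness data from $Q/G$ up to $Q$.

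With covariance in hand,~\cite{kqrfunctor}*{Corollary 3.6} yields the desired $*$-homomorphism $\Pi:\cs(Q/G)\to M(\cs(Q))$. For injectivity, the gauge action of $\mathbb T$ on $\cs(Q)$ extends strictly to $M(\cs(Q))$, and on the \cs-subalgebra generated by the image of $\Pi$ the extension is strongly continuous. Because $(\psi,\pi)$ is scalar-equivariant (the gauge action is trivial on the coefficient algebra and rescales the module), $\Pi$ intertwines the two gauge actions. Since $\pi$ is already injective on $A(Q/G)$ by surjectivity of $q^0$, the gauge-invariant uniqueness theorem for topological quiver \cs-algebras~\cite{mtquiver} forces $\Pi$ to be injective on all of $\cs(Q/G)$.
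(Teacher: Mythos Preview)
Your overall strategy matches the paper's: build a Cuntz--Pimsner covariant correspondence homomorphism by precomposition with the orbit maps and then invoke \cite{kqrfunctor}*{Corollary~3.6}. The place where your outline is too thin is condition~(iii), which in the paper absorbs the bulk of the argument. The workable route is not the abstract comparison of $\phi(f)\in\bK(X(Q/G))$ with $\phi(\pi(f))\in\bK(X(Q))$ that you sketch, but the concrete topological characterization of the Katsura ideal for quivers as $J_X=C_0\bigl((E^0/G)_{\mathrm{reg}}\bigr)$. One must show $(q^0)^{-1}\bigl((E^0/G)_{\mathrm{reg}}\bigr)\subseteq E^0_{\mathrm{reg}}$, and this means, for each $v$ with $q^0(v)$ regular, producing a precompact neighborhood $U\ni v$ with $r^{-1}(\overline U)$ compact and $s|_{r^{-1}(U)}$ a local homeomorphism (built from the downstairs data via the pullback identification $E^1\cong r^*(E^0)$ of Theorem~\ref{thmhus}), and then separately verifying $v\in\mathrm{Int}\bigl(\overline{r(E^1)}\bigr)$. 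This is genuine work specific to quivers, not a one-line transport principle.

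Two smaller points. Extending $\psi$ from $C_c(E^1/G)$ to all of $X(Q/G)$ is not a job for Lemma~\ref{convergence}; what is needed is the correspondence-norm bound $\|\psi(\xi)\|\le\|\xi\|_X$, which the paper obtains from the direct computation $\langle\psi(\xi)f,\psi(\xi)f\rangle(v)=\langle\xi,\xi\rangle(q^0(v))\,|f(v)|^2$. And for injectivity the paper simply records that injectivity of the coefficient map forces injectivity of $\Pi$; your gauge-invariant uniqueness argument is a correct way to justify that implication, but the paper does not go through it explicitly.
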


\begin{proof}
We begin by fixing notation. Let $A=C_0(E^0/G), X=\overline{C_c(E^1/G)}^{\|\cdot\|}, B=C_0(E^0), \text{ and } Y=\overline{C_c(E^1)}^{\|\cdot\|}.$ Recall that $M(Y)=\mathcal{L}(B, Y)$, and we must construct a homomorphism $(\mu, \nu):(X, A)\to (M(Y), M(B) )$. The upshot is that both of these maps arise as precomposition with the quotient maps for the action of $G$ on $Q$. 

First, notice that all of  content of \cite{hskew}*{Lemma 4.6-Proposition 4.10} holds with minor cosmetic changes to reflect the absence of global triviality in the present setting. For completeness, we will abridge the arguments given there and highlight the required changes. Fixing $\xi \in C_c(E^1/G)$ and $f\in C_c(E^0)$, the function 
\[(\xi\circ q )\cdot (f\circ s):F^1\to \C\]
has support confined to the compact set $(\supp(\xi)\times \supp(f)) \cap (E^1/G*_{\dot{s}}E^0 )$ since $F^1$ is isomorphic to the pullback. Thus, the map 
\begin{align*}
\mu(\xi):C_c(E^0)&\to C_c(E^1)\\
f&\mapsto (\xi\circ q)\cdot (f\circ s) 
\end{align*}
is well defined. This map is linear, and the computation 
\begin{equation}\label{rtaction}
\langle \mu(\xi)\cdot f, \mu(\xi)\cdot f\rangle(v) = \langle \xi, \xi\rangle(q(v))|f(v)|^2\leq \|\xi\|^2\|f\|^2,
\end{equation}
shows that $\mu(\xi)$ is bounded and extends to a bimodule map from $B\to Y$. This map admits an adjoint $\mu(\xi)^*:Y\to B$ given for $\eta\in C_c(E^1)$ by 
\[\mu(\xi)^*\eta (v) = \int \overline{\xi\circ q(e)} \eta(e)\,d\lambda_v(e).\]
Restricting $\eta$ and $\xi\circ q$ to $s^{-1}(v),$ we have 
\[\mu(\xi)\eta (v) = \langle \xi\circ q, \eta \rangle\in \C, \]
the value of the inner product for $L^2(s^{-1}(v), \lambda_v)$. We mention also here that, by \cite{husemoller}*{Chapter 1, Proposition 5.5}, the function $\xi\circ q$ restricted to $s^{-1}(v)$ agrees with $\xi$ restricted to $s^{-1}(q(v )).$ Boundedness of this map now follows from the Cauchy-Schwarz inequality, justifying it's extension to a map $\mu(\xi)^*: Y \to B$. That it is an adjoint for $\mu(\xi)$ follows easily from the definitions. 

To confirm boundedness of the left action $f\cdot \mu(\xi) = (f\circ r)\cdot (\xi\circ q)$, we verify 
\begin{align}\label{ltaction}
\langle f\cdot \mu(\xi), f\cdot \mu(\xi) \rangle (v) &=\int |f\circ r|^2|\xi\circ q|^2 \, d\lambda_v\\
&\leq \|f\|^2\langle \xi, \xi\rangle(q(v))\leq \|\xi\|^2\|f\|^2_u.
\end{align}
Then, the covariance of the left action and inner-product compatibility follows from the commutativity of the range and source maps with the quotient maps as in \cite{hskew}*{Proposition 4.10}. 

The above computations \ref{rtaction} and \ref{ltaction} show that, in fact, $\mu:X\to M_B(Y),$ since the integrands are compactly supported functions. Additionally, $\nu:A\to M(B)$ is nondegenerate as precomposition by continuous functions always is. Thus, it remains to verify axioms (iii) and (iv) of Definition \ref{CPcovcor}, which are more delicate than in the setting of \cite{hskew}. To begin, we must verify that $\nu$ maps the Katsura ideal $J_X$ into $M(B;J_Y) = \{m\in M(B)|\, mB\cup Bm\subset J_Y\}$. For topological quivers, the Katsura ideal is characterized as the ideal of functions supported on the \emph{regular vertices} $E^0_{\text{reg}} = E^0_{\text{fin}}\cap \text{Int}(\overline{r(E^1)}),$ where 

\begin{align*}
E^0_{\text{fin}} = \{ v\in E^1:\, &\text{there exists a precompact neighborhood } V \text{ of } v \text{ so that }r\inv(\overline{V})\\
&\text{is compact and } s|_{r\inv(V)} \text{ is a local homeomorphism}\}.
\end{align*}

To prove that $\nu(J_X)\subseteq M(B;J_Y)$, we have to show that if $f\in C_0(F^0_{\rm reg})$ and $g\in C_0(E^0)$, then $\nu(f)g=(f\circ q^0)g\in C_0(E^0_{\rm reg})$. This is equivalent to the implication
\[
(\nu(f)g)(v)\neq 0\quad\Rightarrow\quad v\in E^0_{\rm reg}.
\]
First, we show that $v\in E^0_{\rm fin}$. If $(\nu(f)g)(v)\neq 0$, then $\nu(f)\neq 0$, which, in turn, implies that $q^0(v)\in F^0_{\rm reg}$. By definition, there exists a precompact neighborhood $V$ of $q(v)$ such that $r_F^{-1}(\overline{V})$ is compact and $s_F|_{r_F^{-1}(V)}$ is a local homeomorphism. Since $(q^0)^{-1}(V)$ is open and $E^0$ is locally compact Hausdorff, there exist a precompact neighborhood $U$ of $v$ such that
\[
U\subseteq\overline{U}\subseteq (q^0)^{-1}(V)\subseteq (q^0)^{-1}(\overline{V}).
\]
We will show the first condition, that $r_E^{-1}(\overline{U})\subseteq E^1$ is compact. Recall that, by $G$-equivariance of $r_E$, $E^1$ is homeomorphic to the total space of the pullback bundle $r_E^*(E^0)=\{(x,u)\in F^1\times E^0~|~r_F(x)=q^0(v)\}$ by Theorem \ref{thmhus}. Next, since $\overline{U}\subseteq (q^0)^{-1}(\overline{V})$, note that
\[
r^{-1}_E(\overline{U})\subseteq r^{-1}_E((q^0)^{-1}(\overline{V}))=(q^0\circ r_E)^{-1}(\overline{V})=(r_F\circ q^1)^{-1}(\overline{V})=(q^1)^{-1}(r_E^{-1}(\overline{V})).
\]
Therefore, we obtain that
\[
r_E^{-1}(\overline{U})\subseteq (r_E^{-1}(\overline{V})\times\overline{U})\cap r^*_E(E^0).
\]
Since the right-hand side is compact, we conclude that $r_E^{-1}(\overline{U})$ is compact.

It remains to prove that $s_E|_{r^{-1}_E(U)}$ is a local homeomorphism. Since $s_E$ and $r_E^{-1}(U)$ are open, it suffices to show that $s_E:r^{-1}_E(U)\to E^0$ is locally injective. Recall that, by $G$-equivariance of $s_E$, the space $E^1$ is homeomorphic to the total space of the pullback bundle $s_E^*(E^0)$. Since $s_F:r_F^{-1}(V)\to F^0$ is locally injective, there is an open set $V'$ in $r_F^{-1}(V)$ such that $s_F|_{V'}$ is injective. Consider the open set
\[
U':=(s^{-1}_F(V')\times U)\cap s^*_E(E^0)\cap r_E^{-1}(U).
\]
Assume that $s_E(x_1,u_1)=s_E(x_2,u_2)$ for some $(x_1,u_1),(x_2,u_2)\in U'$. First, since $s_E(x,u)=u$, we obtain that $u_1=u_2$. Furthermore, $x_1,x_2\in V'$ and 
\[
s_F(x_1)=u_1=u_2=s_F(x_2),
\]
so we conclude that $x_1=x_2$.

To prove that $v\in {\rm Int}(\overline{r_E(E^1)})$, we have to find an open set $U\subseteq \overline{r_E(E^1)}$ such that $v\in U$. 
First, since $q^0(v)\in F^0_{\rm reg}\subseteq {\rm Int}(\overline{r_F(F^1)})$, there exists an open set $V\subseteq \overline{r_F(F^1)}$ such that $q(v)\in V$. Now, $v$ belongs to an open set $U:=(q^0)^{-1}(V)\subseteq (q^0)^{-1}(\overline{r_F(F^1)})$. For any $u\in U$, there is a net $(x_i)_{i\in I}\subseteq F^1$ such that $q^0(u)=\lim_{i\in I} r_F(x_i)$. Since $q^0$ is a quotient open map, we can find a net $(u_j)_{j\in J}\subseteq E^0$ such that $\lim_{j\in J} u_j=u$ and $q^0((u_j)_{j\in J})\subseteq (r_F(x_i))_{i\in I}$, i.e. there is a subnet $(r_F(x_{i_j}))_{j\in J}$ such that $q^0(u_j)=r_F(x_{i_j})$ for all $j\in J$. Consequently, since $E^1\cong r^*_E(E^0)$, we conclude that $(x_{i_j},u_j)\in E^1$ for all $j\in J$ and $\lim_{j\in J}r_E(x_{i_j},u_j))=\lim_{j\in J}u_j=u$. Therefore, $U\subseteq \overline{r_E(E^1)}$ and we are done.

So $v$ is a regular vertex of $B$, and it follows that $\nu(J_X)\subseteq M(B; J_Y)$. 

It remains to show the final condition of Definition \ref{CPcovcor}. For this, there are two essential computations which differ from \cite{hskew}*{Lemma 4.12}. Taking $\eta, \xi \in C_c(E^1)$ and $y\in C_c(F^1)$, one sees that for arbitrary but fixed $(e,v)\in s^*(P)$
\begin{align*}
\theta_{\eta, \xi}(y) (e, v) &= \mu(\eta)\mu(\xi)^*(y)(e, v) = \eta\circ \pi_1(e, v)\cdot \int \overline{\xi\circ \pi_1 (e, v)}y(e, v)\,d\lambda_{(e, v)}\\
&= \eta(e)\cdot \int \overline{\xi(e)} y(e, v)\,d\lambda_v = [\eta\cdot \langle \xi, y_v\rangle](e).
\end{align*}
On the other hand, for $f\in J_X$ we have 
\begin{align*}
\overline{\phi_b}\circ \nu(f) y(e, v) &= [\overline{\phi_B}|(f\circ q) y](e, v) = f\circ q\circ \tilde{r}(e, v) y(e, v)\\
&= f\circ r\circ \pi_1(e, v) y(e, v) = [\phi_A(f)\cdot y(\cdot, v)](e).
\end{align*}
With these two computations replacing those which appear in \cite{hskew}*{Lemma 4.12}, the arguments are identical. This establishes commutativity of the diagram, and hence Cuntz-Pimsner covariance of the correspondence homomorphism $(\mu, \nu)$. 

The homomorphism $\pi: \cs(Q/G)\to \cs(Q)$ is now a consequence of \cite{kqrfunctor}*{Corollary 3.6}. Since $\nu$ is injective, $\Pi$ is too. 
\end{proof}

We now proceed with the proof of the main Theorem \ref{ME} establishing a Morita Equivalence between the reduced crossed product and the \cs-algebra associated to the quotient quiver.
\begin{proof}
That the action is saturated and proper follows from \cite{hrw_symm}*{Lemma 4.1}; all that is required is a free and proper action of a group on the topological space $E^0$, an action on $\cs(Q),$ and an equivariant homomorphism from $C_0(E^0)$ to $M(\cs(Q))$ where $C_0(E^0)$ is given the translation action. The canonical representation of $C_0(E^0)$ and the action $\alpha$ on $\cs(Q)$ induced from the associated correspondence action supply this. Thus, we have
\begin{itemize}
\item Rieffel's generalized fixed-point algebra $\cs(Q)^\alpha$;
\item the dense $*$-subalgebra in $\cs(Q)$
\[\cs(Q)_0 = \text{span}\{ \,k_A(C_0(E^0)) \cs(Q) k_A(C_0(E^0))\},\]
and 
\item a linear map $\Phi:\cs(Q)_0\to \{a\in M(\cs(Q)): \overline{\alpha_g}(a) = a \text{ for all } g\in G \}$.
\end{itemize}

Kaliszewski, Quigg, and Raeburn show that the image of $\Phi$ is a $*$-subalgebra of $M(\cs(Q))$ \cite{kqr_proper}*{Proposition 3.1}, so that it's norm closure is a \cs-algebra, and that this \cs-algebra coincides with $\cs(Q)^\alpha$. So it is enough to show that 
\[\cs(Q)^\alpha\subseteq \Pi(\cs(Q/G)) \text{ and } \Pi(\cs(Q/G))\subseteq \cs(Q)^\alpha\]
through this identification. 

We begin by noting that the for any topological quiver $R = (E^0, E^1, r, s, \lambda)$, space $E^n$ of paths of length $n$ is a topological quiver when endowed with the product topology and an appropriate system of measures \cite{mtquiver}.  Then $X(E^n)$ is isomorphic as a $C_0(E^0)$-correspondence to the $n$-fold balanced tensor product 
\[X(R)^n=\underbrace{X(R)\otimes_{C_0(E^0)}\cdots \otimes_{C_0(E^0)} X(R)}_{n-\text{factors }}.\]
Moreover, if $R$ admits a principal action by $G,$ so does the quiver $R^n$ whose edge space comprises paths of length $n$ (the action on each factor commutes with the range and source maps). By the universal property of $\cs(R),$ there are canonical maps $(k_{X(R)}^n, k_{C_0(E^0)}):(X(R)^n, C_0(E^0))\to \cs(R)$.
Moreover, since $\cs(R)$ is realized as the coisometric representation on the Fock Space of $X(R)$, we have that
\[\cs(R) = \overline{
\text{span} }\{k_{X(R)}^n(C_c(E^n))k_{X(R)}^m(C_c(E^m))^*: m, n \in \mathbb{N}\}.\]

Now we specialize to our setting. Let $Q = (E^0, E^1, r, s, \lambda)$ be a topological quiver admitting a principal action by locally compact group $G,$ and resume the notation from the proof of Proposition \ref{homomorphism}: $A=C_0(E^0/G), X=\overline{C_c(E^1/G)};$ and $B=C_0(E^0), Y = \overline{C_c(E^1)}.$ By abuse of notation, we write $\tau^n = \overline{k_Y^n}\circ \mu$, where $\mu$ is precomposition by the quotient map for the action of $G$ on $E^n$ (instead of on $E^1$).

We now begin with the first inclusion $\cs(Q)^\alpha\subseteq \Pi(\cs(Q/G))$. According to our alternative description of $\cs(Q)^\alpha,$ we need only consider $f, h\in C_c(E^0)$ and $a\in \cs(Q)$ and show for a generic element  $k_B(f)ak_B(h)$ of $\cs(Q)_0,$ that 
\[\Phi(k_B(f)ak_B(h) ) \in \Pi(\cs(Q)).\] 
According to the previous discussion, we may further specialize so that the central factor is of the form $a = k_Y^n(\xi)k_Y^m(\eta)^*$, where $\xi\in C_c(E^n)$ and $\eta\in C_c(E^m)$. Then, we  apply the arguments of \cite{dkq}*{Theorem 5.6, step 5} to yield the inclusion. This entails verification that the product $k_B(f)k_Y^n(\xi)k_Y^m(\eta)^*$ approximately factorizes  as an element $bk_B(h)$ whose image under $\Pi$ is in the span of elements of the form $k^n_Y(\xi_1)k^m_Y(\eta_1)^*\Phi(k_B(h))$ for $\xi_1, \eta_1$ suitably compactly supported functions in the $n$ and $m$-fold tensor products respectively (see \cite{dkq}*{page 1552}). This is accomplished by an appeal to \cite{dkq}*{Lemma 5.8}, with a slight modification in the proof-- where they appeal to \cite{dkq}*{Lemma 5.7}, we instead appeal to our own Lemma \ref{convergence}. 

The other inclusion, 
\[\Pi(\cs(Q/G))\subseteq \cs(Q)^\alpha,\]
follows along similar lines. The arguments from \cite{dkq}*{Theorem 5.6} continue to apply, and \cite{dkq}*{Lemma 5.9} holds for topological quivers without change. The only point of ambiguity is in their appeal to \cite{dkq}*{Corollary 3.9}, but the result they use is more precisely Corollary 3.9(a), which we parse for topological quivers in Lemma \ref{keepingcompact} above. 
\end{proof}

We'll conclude this paper with a couple of applications to the previous theorem, computing Morita equivalences for a handful of examples from section \ref{actionexample}. 
\begin{example}
We'll first consider the coset quivers associated to Euclidean groups $E(n)$. As computed above, the quotient quiver (which is indeed a quiver) is $Q_{\mathbb{R}^n<\mathbb{R}^n}^1$, so we have 
\[\cs(Q_{\mathbb{R}^n<E(n)}^k)\rtimes_r O(n) \cong \cs(Q_{\mathbb{R}^n<\mathbb{R}^n}^1) = \mathcal{O}_\infty.\]

Of course, as we noticed in example \ref{euclidean}, the coset quiver is actually a skew product, so this computation is also available using the techniques of \cite{hskew}.
\end{example}

\begin{example}
Next, we'll compute the Morita equivalence for Example \ref{qr}. As we saw, $Q_\mathbb{R}$ admits a $\mathbb{Z}$ action for which the associated bundle is nontrivial-- hence $Q_\mathbb{R}$ is not a skew product. We have 
\[\cs(Q_\mathbb{R})\rtimes_r \mathbb{Z}\cong \cs(Q_\mathbb{T}),\]
which we've already determined as a homeomorphism algebra. Since $Q_\mathbb{R}$ is a topological graph, this example reduces to the results of \cite{dkq}.
\end{example}
\begin{example}
For a final application, we'll compute the Morita equivalence of Example \ref{kickass}. Choosing groups $N, K, H$ together with the proposed $H$-equivariant map $\phi:N\rtimes H\to N\rtimes H,$ we find that 
\[\cs(Q_{N<N\rtimes K}^\phi)\rtimes _r H\cong \cs(Q(N, K , H, \dot{\phi})),\]
where the quiver on the right is an abreviateion for the quotient quiver appearing in Example \ref{kickass}. As mentioned there, $Q_{N<N\rtimes K}^\phi$ is a genuine topological quiver and cannot be realized as a skew product. Thus, the Morita equivalence computed above is only accessible using the techniques from this work. 
\end{example}

\begin{bibdiv}
\begin{biblist}

\bib{BookLeavitt}{book}{
   author={Abrams, Gene},
   author={Ara, Pere},
   author={Siles Molina, Mercedes},
   title={Leavitt path algebras},
   series={Lecture Notes in Mathematics},
   volume={2191},
   publisher={Springer, London},
   date={2017},
   pages={xiii+287},
   isbn={978-1-4471-7343-4},
   isbn={978-1-4471-7344-1},
   review={\MR{3729290}},
}

\bib{aaLeavitt}{article}{
   author={Abrams, Gene},
   author={Aranda Pino, Gonzalo},
   title={The Leavitt path algebra of a graph},
   journal={J. Algebra},
   volume={293},
   date={2005},
   number={2},
   pages={319--334},
   issn={0021-8693},
   review={\MR{2172342}},
   doi={10.1016/j.jalgebra.2005.07.028},
}

\bib{MR4377762}{collection}{
   editor={Ambily, A. A.},
   editor={Hazrat, Roozbeh},
   editor={Sury, B.}
    title={Leavitt path algebras and classical $K$-theory},
    series={Indian Statistical Institute Series},
    publisher={Springer, Singapore},
    date={2020}, 
    pages={xv+335},
   isbn={978-981-15-1610-8},
   review={\MR{4377762}},
   doi={10.1007/978-981-15-1611-5},
}

\bib{boiler}{article}{
   author={an Huef, Astrid},
   author={Quigg, John},
   author={Raeburn, Iain},
   author={Williams, Dana P.},
   title={Full and reduced coactions of locally compact groups on
   $C^\ast$-algebras},
   journal={Expo. Math.},
   volume={29},
   date={2011},
   number={1},
   pages={3--23},
   issn={0723-0869},
   review={\MR{2785543}},
   doi={10.1016/j.exmath.2010.06.002},
}

\bib{amp}{article}{
   author={Ara, P.},
   author={Moreno, M. A.},
   author={Pardo, E.},
   title={Nonstable $K$-theory for graph algebras},
   journal={Algebr. Represent. Theory},
   volume={10},
   date={2007},
   number={2},
   pages={157--178},
   issn={1386-923X},
   review={\MR{2310414}},
   doi={10.1007/s10468-006-9044-z},
}

\bib{hrw_symm}{article}{
   author={an Huef, Astrid},
   author={Raeburn, Iain},
   author={Williams, Dana P.},
   title={A symmetric imprimitivity theorem for commuting proper actions},
   journal={Canad. J. Math.},
   volume={57},
   date={2005},
   number={5},
   pages={983--1011},
   issn={0008-414X},
   review={\MR{2164592}},
   doi={10.4153/CJM-2005-038-2},
}

\bib{ncgprincipal}{article}{
author={Baum, Paul F.},
author={Hajac, Piotr M.},
author={Matthes, Rainer},
author={Szyma\'nski, Wojciech},
title={Noncommutative Geometry Approach to Principal and Associated Bundles},
year={2007},
note={arXiv:math/0701033},
}

\bib{bkq}{article}{
   author={B\'edos, Erik},
   author={Kaliszewski, S.},
   author={Quigg, John},
   title={Skew products of finitely aligned left cancellative small
   categories and Cuntz-Krieger algebras},
   journal={M\"unster J. Math.},
   volume={14},
   date={2021},
   number={1},
   pages={59--99},
   issn={1867-5778},
   review={\MR{4300163}},
   doi={10.17879/59019527597},
}

\bib{bkqr}{article}{
   author={B\'edos, Erik},
   author={Kaliszewski, S.},
   author={Quigg, John},
   author={Robertson, David},
   title={A new look at crossed product correspondences and associated
   $C^*$-algebras},
   journal={J. Math. Anal. Appl.},
   volume={426},
   date={2015},
   number={2},
   pages={1080--1098},
   issn={0022-247X},
   review={\MR{3314880}},
   doi={10.1016/j.jmaa.2015.01.055},
}

\bib{bmaximal}{article}{
   author={Bilich, Boris},
   title={Maximality of Correspondence Representations},
   journal={Adv. Math},
   volume={462},
   date={2025},
}

\bib{b-do}{article}{
   author={Bilich, Boris},
   author={Dor-On, Adam},
   title={Arveson's Hyperrigidity Conjecture is False},
   note={Preprint. 2024. arXiv: 2404.05018 [math.OA]}
}

\bib{cartan}{article}{
author={Henri Cartan},
title={Seminaire Cartan},
journal={E.N.S.},
year={1949/1950},
note={reprinted by W.A. Benjamin, INC., New York,
Amsterdam, 1967.}
}

\bib{dkq}{article}{
   author={Deaconu, Valentin},
   author={Kumjian, Alex},
   author={Quigg, John},
   title={Group actions on topological graphs},
   journal={Ergodic Theory Dynam. Systems},
   volume={32},
   date={2012},
   number={5},
   pages={1527--1566},
   issn={0143-3857},
   review={\MR{2974209}},
   doi={10.1017/S014338571100040X},
}

\bib{fps}{article}{
   author={Farthing, Cynthia},
   author={Pask, David},
   author={Sims, Aidan},
   title={Crossed products of $k$-graph $C^\ast$-algebras by $\Bbb Z^l$},
   journal={Houston J. Math.},
   volume={35},
   date={2009},
   number={3},
   pages={903--933},
   issn={0362-1588},
   review={\MR{2534288}},
}

\bib{grosstucker}{book}{
   author={Gross, Jonathan L.},
   author={Tucker, Thomas W.},
   title={Topological graph theory},
   series={Wiley-Interscience Series in Discrete Mathematics and
   Optimization},
   note={A Wiley-Interscience Publication},
   publisher={John Wiley \& Sons, Inc., New York},
   date={1987},
   pages={xvi+351},
   isbn={0-471-04926-3},
   review={\MR{0898434}},
}

\bib{hskew}{article}{
   author={Hall, Lucas},
   title={Coactions and skew products for topological quivers},
   journal={Proc. Edinb. Math. Soc. (2)},
   volume={67},
   date={2024},
   number={3},
   pages={921--946},
   issn={0013-0915},
   review={\MR{4808733}},
   doi={10.1017/S0013091524000208},
}

\bib{hkqw2}{article}{
   author={Hall, Lucas},
   author={Kaliszewski, S.},
   author={Quigg, John},
   author={Williams, Dana P.},
   title={Groupoid semidirect product Fell bundles II---principal actions
   and stabilization},
   journal={Indiana Univ. Math. J.},
   volume={72},
   date={2023},
   number={3},
   pages={1147--1173},
   issn={0022-2518},
   review={\MR{4613750}},
   doi={10.1512/iumj.2023.72.9447},
}

\bib{hewittross}{book}{
   author={Hewitt, Edwin},
   author={Ross, Kenneth A.},
   title={Abstract harmonic analysis. Vol. I: Structure of topological
   groups. Integration theory, group representations},
   series={Die Grundlehren der mathematischen Wissenschaften},
   volume={Band 115},
   publisher={Springer-Verlag, Berlin-G\"ottingen-Heidelberg; Academic
   Press, Inc., Publishers, New York},
   date={1963},
   pages={viii+519},
   review={\MR{0156915}},
}

\bib{husemoller}{book}{
    AUTHOR = {Husemoller, Dale},
     TITLE = {Fibre bundles},
    SERIES = {Graduate Texts in Mathematics},
    VOLUME = {20},
   EDITION = {Third},
 PUBLISHER = {Springer-Verlag, New York},
      YEAR = {1994},
     PAGES = {xx+353},
      ISBN = {0-387-94087-1},
       DOI = {10.1007/978-1-4757-2261-1},
       URL = {https://doi.org/10.1007/978-1-4757-2261-1},
}

\bib{kkqs}{article}{
   author={Kaliszewski, S.},
   author={Kumjian, Alex},
   author={Quigg, John},
   author={Sims, Aidan},
   title={Topological realizations and fundamental groups of higher-rank
   graphs},
   journal={Proc. Edinb. Math. Soc. (2)},
   volume={59},
   date={2016},
   number={1},
   pages={143--168},
   issn={0013-0915},
   review={\MR{3439127}},
   doi={10.1017/S0013091515000061},
}

\bib{kqskew}{article}{
   author={Kaliszewski, S.},
   author={Quigg, John},
   title={Coactions and skew products of topological graphs},
   journal={Integral Equations Operator Theory},
   volume={75},
   date={2013},
   number={2},
   pages={187--196},
   issn={0378-620X},
   review={\MR{3008922}},
   doi={10.1007/s00020-012-2022-4},
}

\bib{kqrskew}{article}{
   author={Kaliszewski, S.},
   author={Quigg, John},
   author={Raeburn, Iain},
   title={Skew products and crossed products by coactions},
   journal={J. Operator Theory},
   volume={46},
   date={2001},
   number={2},
   pages={411--433},
   issn={0379-4024},
   review={\MR{1870415}},
}

\bib{kqr_proper}{article}{
   author={Kaliszewski, S.},
   author={Quigg, John},
   author={Raeburn, Iain},
   title={Proper actions, fixed-point algebras and naturality in nonabelian
   duality},
   journal={J. Funct. Anal.},
   volume={254},
   date={2008},
   number={12},
   pages={2949--2968},
   issn={0022-1236},
   review={\MR{2418615}},
   doi={10.1016/j.jfa.2008.03.010},
}

\bib{kqrfunctor}{article}{
   author={Kaliszewski, S.},
   author={Quigg, John},
   author={Robertson, David},
   title={Functoriality of Cuntz-Pimsner correspondence maps},
   journal={J. Math. Anal. Appl.},
   volume={405},
   date={2013},
   number={1},
   pages={1--11},
   issn={0022-247X},
   review={\MR{3053482}},
   doi={10.1016/j.jmaa.2013.02.069},
}

\bib{kqrCPcoaction}{article}{
   author={Kaliszewski, S.},
   author={Quigg, J.},
   author={Robertson, D.},
   title={Coactions on Cuntz-Pimsner algebras},
   journal={Math. Scand.},
   volume={116},
   date={2015},
   number={2},
   pages={222--249},
   issn={0025-5521},
   review={\MR{3367428}},
   doi={10.7146/math.scand.a-21161},
}

\bib{katsura1}{article}{
   author={Katsura, Takeshi},
   title={A class of $C^\ast$-algebras generalizing both graph algebras and
   homeomorphism $C^\ast$-algebras. I. Fundamental results},
   journal={Trans. Amer. Math. Soc.},
   volume={356},
   date={2004},
   number={11},
   pages={4287--4322},
   issn={0002-9947},
   review={\MR{2067120}},
   doi={10.1090/S0002-9947-04-03636-0},
}

\bib{katsura2}{article}{
   author={Katsura, Takeshi},
   title={A class of $C^*$-algebras generalizing both graph algebras and
   homeomorphism $C^*$-algebras. II. Examples},
   journal={Internat. J. Math.},
   volume={17},
   date={2006},
   number={7},
   pages={791--833},
   issn={0129-167X},
   review={\MR{2253144}},
   doi={10.1142/S0129167X06003722},
}

\bib{katsura3}{article}{
   author={Katsura, Takeshi},
   title={A class of $C^*$-algebras generalizing both graph algebras and
   homeomorphism $C^*$-algebras. III. Ideal structures},
   journal={Ergodic Theory Dynam. Systems},
   volume={26},
   date={2006},
   number={6},
   pages={1805--1854},
   issn={0143-3857},
   review={\MR{2279267}},
   doi={10.1017/S0143385706000320},
}

\bib{katsura4}{article}{
   author={Katsura, Takeshi},
   title={A class of $C^*$-algebras generalizing both graph algebras and
   homeomorphism $C^*$-algebras. IV. Pure infiniteness},
   journal={J. Funct. Anal.},
   volume={254},
   date={2008},
   number={5},
   pages={1161--1187},
   issn={0022-1236},
   review={\MR{2386934}},
   doi={10.1016/j.jfa.2007.11.014},
}

\bib{ktexotic}{article}{
   author={Kubischta, Eric},
   author={Teixeira, Ian},
   title={Family of quantum codes with exotic transversal gates},
   journal={Phys. Rev. Lett.},
   volume={131},
   date={2023},
   number={24},
   pages={Paper No. 240601, 6},
   issn={0031-9007},
   review={\MR{4681998}},
   doi={10.1103/physrevlett.131.240601},
}

\bib{kpaction}{article}{
   author={Kumjian, Alex},
   author={Pask, David},
   title={$C^*$-algebras of directed graphs and group actions},
   journal={Ergodic Theory Dynam. Systems},
   volume={19},
   date={1999},
   number={6},
   pages={1503--1519},
   issn={0143-3857},
   review={\MR{1738948}},
   doi={10.1017/S0143385799151940},
}

\bib{kprr}{article}{
   author={Kumjian, Alex},
   author={Pask, David},
   author={Raeburn, Iain},
   author={Renault, Jean},
   title={Graphs, groupoids, and Cuntz-Krieger algebras},
   journal={J. Funct. Anal.},
   volume={144},
   date={1997},
   number={2},
   pages={505--541},
   issn={0022-1236},
   review={\MR{1432596}},
   doi={10.1006/jfan.1996.3001},
}

\bib{lance}{book}{
   author={Lance, E. C.},
   title={Hilbert $C^*$-modules},
   series={London Mathematical Society Lecture Note Series},
   volume={210},
   note={A toolkit for operator algebraists},
   publisher={Cambridge University Press, Cambridge},
   date={1995},
   pages={x+130},
   isbn={0-521-47910-X},
   review={\MR{1325694}},
   doi={10.1017/CBO9780511526206},
}

\bib{mccann}{thesis}{
author = {McCann, Shawn J.},
title = {$C^*$-algebras associated with topological group quivers},
date = {2012},
school = {University of Calgary, Calgary, Canada},
type = {PhD thesis},
}

\bib{mostert}{article}{
    AUTHOR = {Mostert, Paul S.},
     TITLE = {Sections in principal fibre spaces},
   JOURNAL = {Duke Math. J.},
    VOLUME = {23},
      YEAR = {1956},
     PAGES = {57--71},
}

\bib{mstensor}{article}{
   author={Muhly, Paul S.},
   author={Solel, Baruch},
   title={On the Morita equivalence of tensor algebras},
   journal={Proc. London Math. Soc. (3)},
   volume={81},
   date={2000},
   number={1},
   pages={113--168},
   issn={0024-6115},
   review={\MR{1757049}},
   doi={10.1112/S0024611500012405},
}

\bib{mtquiver}{article}{
   author={Muhly, Paul S.},
   author={Tomforde, Mark},
   title={Topological quivers},
   journal={Internat. J. Math.},
   volume={16},
   date={2005},
   number={7},
   pages={693--755},
   issn={0129-167X},
   review={\MR{2158956}},
   doi={10.1142/S0129167X05003077},
}

\bib{palais}{article}{
    AUTHOR = {Palais, Richard S.},
     TITLE = {On the existence of slices for actions of non-compact {L}ie groups},
   JOURNAL = {Ann. of Math. (2)},
    VOLUME = {73},
      YEAR = {1961},
     PAGES = {295--323},
      ISSN = {0003-486X},
       DOI = {10.2307/1970335},
       URL = {https://doi.org/10.2307/1970335},
}

\bib{pqr}{article}{
   author={Pask, David},
   author={Quigg, John},
   author={Raeburn, Iain},
   title={Coverings of $k$-graphs},
   journal={J. Algebra},
   volume={289},
   date={2005},
   number={1},
   pages={161--191},
   issn={0021-8693},
   review={\MR{2139097}},
   doi={10.1016/j.jalgebra.2005.01.051},
}

\bib{pqs}{article}{
   author={Pask, David},
   author={Quigg, John},
   author={Sims, Aidan},
   title={Coverings of skew-products and crossed products by coactions},
   journal={J. Aust. Math. Soc.},
   volume={86},
   date={2009},
   number={3},
   pages={379--398},
   issn={1446-7887},
   review={\MR{2529331}},
   doi={10.1017/S144678870800030X},
}

\bib{graphalgebras}{book}{
   author={Raeburn, Iain},
   title={Graph algebras},
   series={CBMS Regional Conference Series in Mathematics},
   volume={103},
   publisher={Conference Board of the Mathematical Sciences, Washington, DC;
   by the American Mathematical Society, Providence, RI},
   date={2005},
   pages={vi+113},
   isbn={0-8218-3660-9},
   review={\MR{2135030}},
   doi={10.1090/cbms/103},
}

\bib{raeburnwilliams}{book}{
   author={Raeburn, Iain},
   author={Williams, Dana P.},
   title={Morita equivalence and continuous-trace $C^*$-algebras},
   series={Mathematical Surveys and Monographs},
   volume={60},
   publisher={American Mathematical Society, Providence, RI},
   date={1998},
   pages={xiv+327},
   isbn={0-8218-0860-5},
   review={\MR{1634408}},
   doi={10.1090/surv/060},
}

\bib{r_proper}{article}{
   author={Rieffel, Marc A.},
   title={Proper actions of groups on $C^*$-algebras},
   conference={
      title={Mappings of operator algebras},
      address={Philadelphia, PA},
      date={1988},
   },
   book={
      series={Progr. Math.},
      volume={84},
      publisher={Birkh\"auser Boston, Boston, MA},
   },
   isbn={0-8176-3476-2},
   date={1990},
   pages={141--182},
   review={\MR{1103376}},
}

\bib{robinson}{book}{
   author={Robinson, Derek J. S.},
   title={A course in the theory of groups},
   series={Graduate Texts in Mathematics},
   volume={80},
   edition={2},
   publisher={Springer-Verlag, New York},
   date={1996},
   pages={xviii+499},
   isbn={0-387-94461-3},
   review={\MR{1357169}},
   doi={10.1007/978-1-4419-8594-1},
}

\bib{rotman}{book}{
AUTHOR = {Rotman, Joseph J.},
     TITLE = {An introduction to the theory of groups},
    SERIES = {Graduate Texts in Mathematics},
    VOLUME = {148},
   EDITION = {Fourth},
 PUBLISHER = {Springer-Verlag, New York},
      YEAR = {1995},
     PAGES = {xvi+513},
      ISBN = {0-387-94285-8},
   MRCLASS = {20-01},
  MRNUMBER = {1307623},
MRREVIEWER = {Ronald Solomon},
       DOI = {10.1007/978-1-4612-4176-8},
}

\bib{toolkit}{book}{
   author={Williams, Dana P.},
   title={A tool kit for groupoid $C^*$-algebras},
   series={Mathematical Surveys and Monographs},
   volume={241},
   publisher={American Mathematical Society, Providence, RI},
   date={2019},
   pages={xv+398},
   isbn={978-1-4704-5133-2},
   review={\MR{3969970}},
   doi={10.1016/j.physletb.2019.06.021},
}

\end{biblist}
\end{bibdiv}

\end{document}